\newtheorem{theorem}{Theorem}[section]
\newtheorem{remark}{Remark}[section]
\newtheorem{proposition}{Proposition}[section]
\newtheorem{cor}{Corollary}[section]
\newtheorem{example}{Example}[section]
\newtheorem{definition}{Definition}[section]
\newcommand{\Z}{{\mathbb{Z}}}
\newcommand{\C}{{\mathbb{C}}}
\newcommand{\Q}{{\mathbb{Q}}}
\newcommand{\pp}{{\mathbb{P}}}
\newcommand{\G}{{\mathcal{G}}}
\newcommand{\MS}{{\mathcal{S}}}
\newcommand{\A}{{\mathcal{A}}}
\newcommand{\Cc}{{\mathcal{C}}}
\newcommand{\cD}{{\mathcal{D}}}
\newcommand{\cL}{{\mathcal{L}}}
\newcommand{\cO}{{\mathcal{O}}}
\newcommand{\cX}{{\mathcal{X}}}
\newcommand{\Sc}{{\mathcal{S}}}
\newcommand{\Fc}{{\mathcal{F}}}
\newcommand{\Tc}{{\mathcal{T}}}
\newcommand{\Dc}{{\mathcal{D}}}
\newcommand{\stab}{\operatorname{stab}}
\begin{document}
\title{Garside elements, inertia and Galois action on braid groups}

\author{F.Callegaro}
\address{F.C.: Dipartimento di matematica, Largo Bruno Pontecorvo, 5 56127 Pisa, Italia\newline 
E-mail address: {\tt  callegaro@dm.unipi.it}}

\author{G.Gaiffi}
\address{G.G.: Dipartimento di matematica, Largo Bruno Pontecorvo, 5 56127 Pisa, Italia\newline 
E-mail address: {\tt  gaiffi@dm.unipi.it}}

\author{P. Lochak}
\address{P.L.:  CNRS and Institut Math\'ematique de Jussieu, 
Universit\'e P. et M. Curie, 4 place Jussieu, 75252 Paris Cedex 05, France\newline 
E-mail address: {\tt  pierre.lochak@imj-prg.fr}}

\begin{abstract}

An important piece of information in the theory of the arithmetic Galois action on 
the geometric fundamental groups of schemes is that divisorial inertia is acted on 
cyclotomically. In this note we describe   a research plan on  this fact in the case of the 
profinite braid groups arising from complex reflection groups, naturally viewing 
them as the geometric fundamental groups of the attending classifying spaces.
We also include the case of the full (non colored) braid groups, whose completed
classifying spaces are Deligne-Mumford stacks rather than schemes.
  
\end{abstract}

\maketitle

\section{Introduction}
\label{sec:introduction}

Some years ago one of us (P.L.) asked whether one could somehow transpose and adapt Grothendieck-Teichm\"uller
theory, which deals with moduli stacks of curves, to the setting of complex braid groups and their classifying
spaces, thus obtaining what could be dubbed a `Grothendieck-Artin theory' and an Artin (or Artin-Brieskorn)
`lego'. We will not discuss here the numerous ins and outs of this suggestion, refering the curious reader to \cite{lo} 
and \cite{mar}. 
This note can be viewed as a warmup (see also 
\cite{mar} and 
\cite{mts}); it deals with the Galois action only and spells out the connection between Garside elements of the complex braid groups and the inertia subgroups attached to the components of the divisors at infinity of the De Concini-Procesi wonderful models associated to the corresponding classifying spaces. We then sketch the first steps of a research plan whose aim is to understand the Galois action on the profinite complex braid groups and as a starting issue to get a list of elements which are acted on cyclotomically by the arithmetic Galois group. 
It seems interesting to inquire whether
or not this list is actually exhaustive (see at the very end of the paper for a precise formulation).

We will need to put together group theoretic, geometric and arithmetic pieces of information; rather than
taking the prerequisites for granted we decided it could be useful to some readers to provide short `reminders',
which of course are designed to be skipped at will. More precisely Section \ref{sec:dpm} recalls the basics of the
construction of the wonderful models whereas Section \ref{sec:center} provides a result on the structure of the center
of the complex braid groups which is needed in the geometric discussion of Sections \ref{sec:inertia_center},
\ref{sec:inertia_divisor} and \ref{sec:quotient}. However the  discussion now has to take care of the additional 
ingredients which are indispensible when dealing with an {\it arithmetic} action, among which we find arithmetic
fundamental groups, profinite completions, fields of definitions, tangential basepoints etc. These are
recalled in the intermediate Section \ref{sec:galois}, in a format adapted to our 
concrete needs. Finally the wrap-up Section \ref{sec:cyclotomic} describes  the expected conclusions of the first part of this research plan (with sketches of the proofs) concerning the Galois action on (divisorial) inertia.

Let us close this introduction with a sheer enumeration of the group theoretic and geometric objects we
will be dealing with. These objects and their notation are quite classical (see especially
\cite{b2, bmr, ddgkm}).
Thus one starts from an irreducible finite complex reflection group $W$;
it gives rise to a central hyperplane arrangement whose complement is denoted $X=X_W$ (sometimes $X(W)$).
The affine variety $X_W$ is $K(\pi,1)$ (see \cite{b2} as an ultimate reference) with topological fundamental group $P(W)=\pi_1^{top}(X_W)$;
see Section \ref{sec:galois} for more on fundamental groups and basepoints. 
The group $P=P(W)$ is the attending pure complex braid group. Moreover $W$ acts 
{\it freely } on $X_W$ with quotient $X_W/W=Y_W=Y$. So we get a Galois \'etale cover 
$X_W\rightarrow Y_W$ with (geometric) Galois group $W$. Because $W$ is a {\it reflection} group, 
$Y_W$ is also {\it affine}, and $\pi_1^{top}(Y_W)=B(W)=B$ is the full complex braid group
($B(W)/P(W) \cong W$). Finally we denote by $\overline{X} =\overline{X}_W$  the (minimal) wonderful model of 
$X=X_W$ whose construction is recalled in Section \ref{sec:dpm}, and by $\cD=\overline{X}\setminus X$
the divisor at infinity, which has strict normal crossings.

{\bf Addendum.} The content of Sections \ref{sec:dpm},~\ref{sec:center}, \ref{sec:inertia_center}, \ref{sec:inertia_divisor}, \ref{sec:quotient} has been extracted as an independent paper (see \cite{cgl16}).

\section{Minimal models} 
\label{sec:dpm}

In 
\cite{dcp1, dcp2} 
De Concini and Procesi introduced and described what they called {\em wonderful models}  associated 
with subspace arrangements. We briefly recall  their construction in the special case of {\it hyperplane} arrangements.
In the next sections we will further specialize to arrangements associated with  complex reflection groups.

\subsection{Irreducible subspaces}
\label{subsec1dpm}
Let $V$ be a complex finite dimensional vector space which we identify with its dual by means of 
a given Hermitian nondegenerate pairing. An \emph{hyperplane arrangement} in $V$ is finite collection $\A$ of affine hyperplanes in $V$. The arrangement $\A$ is a \emph{central arrangement} if $\cap \A \neq \emptyset$. In this case we assume that $O \in \cap \A$.
Let $L(\A)$ be the poset of all possible non-empty intersections of elements of $\A$, ordered by reverse inclusion. We call $\A$ \emph{essential} if the maximal elements of $L(\A)$ are points. In particular if $\A$ is central we have that $\A$ is essential if and only if $\cap \A = \{ O\}.$ 
Let  $\A$ be a central  hyperplane arrangement in  $V$.   
For every subspace $B\subset  V$, we write $B^\perp$ for its orthogonal and denote $\A^{\perp}$ 
the arrangement of {\it lines}  in $V$, dual to $\A$:
$$\A^\perp=\{A^\perp \:|\: A\in \A\};$$
finally let $\Cc_\A$ (or $\Cc(\A)$) be the closure of $\A^{\perp}$ in $V$ under the sum.

\begin{definition}
 Given a subspace $U\in\Cc_\A$, a {\rm decomposition} of $U$ in $\mathbf{\Cc_\A}$ is a collection
$\{U_1,\ldots,U_k\}$ ($k>1$) of non zero subspaces in $\Cc_\A$ such that
\begin{enumerate}
 \item $U=U_1\oplus\cdots\oplus U_k;$
 \item for every subspace $A\in\Cc_\A$ such that $A\subset U$, we have 
$A\cap U_1,\ldots,A\cap U_k \in \Cc_\A$ and $A=\left(A\cap U_1\right)\oplus\cdots\oplus \left(A\cap U_k\right)$.
\end{enumerate}
\end{definition}

\begin{definition}[Irreducible subspace and notation in the case of reflection groups]
 A nonzero subspace $F\in\Cc_\A$ which does not admit a nontrivial decomposition  is called 
{\rm irreducible} and the set of irreducible subspaces is denoted $\Fc_\A$ (or $\Fc (\A)$, or just $\Fc$).  
In the case when $\A=\A_W$ is the hyperplane arrangement associated 
with a complex reflection group $W$ we  write $\Fc_W$ (resp. $\Fc(W)$) instead of $\Fc_{\A_W}$ 
(resp. $\Fc(\A_W)$).
\end{definition}

\begin{remark}
\label{rootirreducibles}
Consider  a root system $\Phi$ in a complexified vector space \(V\) and its associated 
root  arrangement, i.e.  $\A$ is the (complex) hyperplane arrangement defined by the hyperplanes 
orthogonal to the roots in $\Phi$. Then the  building set of irreducibles is the set  of the  subspaces 
spanned by  the irreducible root subsystems of $\Phi$ 
(see \cite{y}).
\end{remark}

\begin{definition}
A subset $\MS\subset \Fc_\A$ is called ($\Fc_\A$-)nested, if
given any subset $\{U_1,\ldots ,U_h \}\subseteq \MS$ (with \(h>1\)) of pairwise non comparable 
elements, we have  $U_1 + \cdots + U_h\notin \mathbf{\Fc_\A}$.
\end{definition}

\begin{example}
Let us consider the case of the symmetric group \(W=S_n\) and let \(\A_{S_n}\) be its corresponding {\em essential} 
arrangement in \(V=\C^n/<(1,1,...,1)>\), i.e. we consider the hyperplanes defined by the equations \(x_i-x_j=0\) in \(V\).  

Then   $\Fc(S_n)$ consists of all the subspaces in \(V\) spanned by the {\rm irreducible} root subsystems, 
that is by the subspaces whose orthogonals are described by  equations of the form 
\(x_{i_1}=x_{i_2}= \cdots =x_{i_k}\) with \(k\geq 2\).  Therefore there  is a one-to-one correspondence between 
the elements of $\Fc(S_n)$ and the subsets of $\{1,\cdots,n\}$ with at least 2 elements:
to an \(A\in \Fc(S_n)\) whose orthogonal is described by the equations  \(x_{i_1}=x_{i_2}= \cdots =x_{i_k}\)
there corresponds the set \(\{i_1,i_2,\ldots, i_k\}\).
As a consequence, a $\Fc(S_n)$-nested set  \(\Sc\) corresponds to a set  of subsets of $\{1,\cdots,n\}$ 
with the property that its elements have cardinality \(\geq 2\) and if \(I\) and \(J\) belong to \(\Sc\) 
then either \(I\cap J=\emptyset\) or one of the two sets is included into the other.       

\end{example}

\begin{example}
Let us consider the real reflection group \(W_{D_n}\)  associated with the root system of type \(D_n\) (\(n\geq 4\)). 
The reflecting hyperplanes have equations \(x_i-x_j=0\) and \(x_i+x_j=0\) in \(V=\C^n\).

The  subspaces    $\Fc(W_{D_n})$  are  all the subspaces in \(V\) spanned by the {\rm irreducible} root subsystems. They  can be partitioned into two families. 
The subspaces in the first family are the {\em strong subspaces} \(\overline{H}_{i_1,i_2,...,i_k}\) whose orthogonals are described by  equations of the form 
\(x_{i_1}=x_{i_2}= \cdots =x_{i_k}=0\) with \(k\geq 3\) (if \(k=2\) this subspace is not irreducible). 
We can represent them by associating to \(\overline{H}_{i_1,i_2,...,i_t}\) the subset \(\{0,i_1,i_2,...,i_k\}\) of \(\{0,1,...,n \}\).

The  second family is made by the {\em weak  subspaces} \(H_{i_1,i_2,...,i_k}(\epsilon_2,...,\epsilon_k)\)
whose orthogonals have   equations of the form 
\(x_{i_1}=\epsilon_2 x_{i_2}=  \cdots =\epsilon_k x_{i_k}\) where \(\epsilon_i=\pm 1\) and  \(k\geq 2\). 

Let us suppose that \(i_1<i_2<\cdots < i_k\); then we can represent these weak subspaces by associating to \(H_{i_1,i_2,...,i_t}(\epsilon_2,...,\epsilon_k)\) the  {\em weighted} subset \(\{i_1, \epsilon_2 i_2,..., \epsilon_k i_k\}\) of \(\{1,...,n\}\). 

According to the representation of the irreducibles by  subsets of \(\{0, 1,....,n\}\)  described here, a $\Fc(W_{D_n})$-nested set is represented by a set \(\{A_1,...,A_m\}\) of (possibly weighted) subsets of \(\{0,...,n\}\) with the following properties:
\begin{itemize}
	\item  the subsets that contain 0  are not weighted; they   are linearly ordered by inclusion; 
	\item the subsets that do not  contain 0 are weighted; 
	\item if in a nested set there is no pair of type \(\{ i,-j\}, \{i, j\}\), then  for any pair of subsets  \(A_i,A_j\), we have that, forgetting their weights, they are one included into the other or  disjoint; if  \(A_i,A_j\) both represent weak subspaces one  included into the other (say \(A_i \subset A_j\)), then their weights must be compatible. This means that, up to the multiplication of all the weights of   \(A_i\) by  \(\pm 1\),  the weights associated to the same numbers must be equal.
	\item in a nested set there may be one (and only one) pair \(\{ i,-j\}, \{i, j\}\) and in this case any other element \(B\) of the nested set  satisfies  (forgetting its weights) \(B\cap\{i,j\}=\emptyset\) or \(\{0,i,j\}\subsetneq B\).

\end{itemize}

\end{example}

\begin{example}
Let us consider the real reflection group \(W_{G_2}\)  associated with the root system  \(G_2\). 
Since this is a two dimensional root system, the irreducible subspaces are the subspaces spanned by the roots and the whole space \(V=\C^2\).

Therefore the $\Fc(G_{D_n})$-nested  are the sets of cardinality 1  made by one irreducible subspace and  the sets of cardinality 2 made by \(V\) and by the subspace spanned by one root.

\end{example}

\subsection{Definition of the models and main properties}
\label{subsec2dpm}

 Let \(\A\) now be a (central) {\it hyperplane}  arrangement in the  complex space  \(V\). We denote its complement by 
$X_\A$ (or $X(\A)$) and again write simply $X_W$ (or $X(W)$) in the case of a complex reflection group $W$.
 Then we can consider the  embedding
	$$ i:X (\A)  \rightarrow  V \times \prod_{D \in \Fc(\A)}^{}\pp(V /D^\perp)$$
where the first coordinate is the inclusion and the map from
$X(\A) $ to $\pp(V /D^\perp )$ is the restriction of the
canonical projection $V\setminus D^\perp \rightarrow \pp(V /D^\perp)$.

\begin{definition}
The minimal  wonderful  model $ \overline{X}(\A)$ is  obtained by  taking the closure of the image of the map $i$.
\end{definition}

\begin{remark} \label{maximalremark} 
Actually in \cite{dcp1} not one but many wonderful models are associated with a given  arrangement 
(see \cite{gs} for a classification in the case of root hyperplane arrangements);  here we will focus  on the minimal one.  
Note that among these models there is always a maximal one, obtained by substituting $\Cc(A)$ for $\Fc(\A)$ 
 in the definition above.
\end{remark}

De Concini and Procesi proved in \cite{dcp1} that the complement $\cD$ of  $X(\A)  $ in $\overline{X}(\A)$ is a divisor 
with strict normal crossings whose irreducible components are naturally in bijective correspondence 
with the elements of \(\Fc(\A)\).They are denoted by \(\cD_F\)  (or $\cD(F)$) for $F\in\Fc(\A)$. 

Next if $\pi$ is the projection of $\overline{X}(\A)$ onto the first component \(  V\),  one observes  
that the restriction of $\pi$ to $X(\A)$ is an isomorphism and $\cD(F)$ can be characterized as the unique 
irreducible component of the divisor at infinity $\cD$ such that $\pi(\cD_F)=F^\perp$.  
A complete characterization of the boundary divisor $\cD$ is then afforded by the observation that, 
if we consider  a collection $\Tc$ of subspaces in $\Fc_\A$,  then
$$\cD_\Tc= \bigcap_{A\in\Tc} \cD_A$$
is non empty if and only if $\Tc$ is \(\Fc_\A\)-nested; moreover in that case $\cD_\Tc$ is smooth and irreducible,
and it is part of the stratification associated with $\cD$.

Nowadays this construction of the De Concini-Procesi wonderful models can be viewed as a  special case of  
other more general constructions which, starting from a `good' stratified variety, produce models by blowing up 
a suitable subset of the strata. Among these constructions we recall the models  described by  Fulton-MacPherson in \cite{fm},
by MacPherson and Procesi in \cite{mp}, by  Li in \cite{li},  by Ulyanov in \cite{u} and by Hu in \cite{hu}. An  interesting survey including 
tropical compactifications can be found in Denham's paper \cite{de}.

From this point of view, one considers $V$ as a variety stratified by the subspaces in $\Fc_\A^\perp$
and the model $\overline{X}(\A)$ is obtained by blowing up the strata in order of {\it increasing} dimensions,
proceeding as follows: First choose an ordering \(A_0, A_1,..., A_k\) of the  subspaces in $\Fc_\A^\perp$
which respects dimensions, i.e. such that $dim(A_i)\le dim(A_j)$ if $i\le j$. Assume that $V\in \Fc_\A^\perp$
(which will always be the case in the sequel) and so that $A_0=\{0\}$.
Then one starts by  blowing up  the stratified variety $V$ at the origin $0$, obtaining
a variety $X_0$;  at the next step  one blows up  $X_0$ along the proper transform of $A_1$
in order to obtain $X_1$, and so on... The end result, after a finite number of steps (=the cardinality of $\Fc_\A$)
is the wonderful model $\overline{X}(\A)$. 

\section{The center of the complex braid groups}
\label{sec:center}

From now on we restrict attention to arrangements arising from complex reflection groups.
As mentioned in the introduction, in order to describe the inertia elements we will need a few pieces of 
information on the centers of the attending braid groups, which we recall in this short group theoretic section. 
Let $W$ be an irreducible finite complex reflection group and let $B=B(W)$ and $P=P(W)$ denote as
usual the associated full and pure braid groups. We write $Z(G)$ for the center of a group $G$.

In \cite{bmr} central elements $\beta \in Z(B)$ and  $\pi \in Z(P)$ were introduced; they are
of infinite order, with $\beta^{|Z(W)|} = \pi$. We recall the following results from \cite{bmr, b2, dmm}:

\begin{theorem}\label{thm:center_b} The center $Z(B(W))$ is infinite cyclic, generated by $\beta$.
\end{theorem}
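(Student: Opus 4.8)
Since $\beta$ is central of infinite order, $\langle\beta\rangle\cong\Z$ is an infinite cyclic central subgroup of $B$, so the entire content of the statement is the reverse inclusion $Z(B)\subseteq\langle\beta\rangle$. The plan is to push any central element of $B$ down to the pure braid group $P$ along the exact sequence $1\to P\to B\xrightarrow{q} W\to 1$ and there invoke the structure of $Z(P)$. The first thing I would record is the image of $\beta$ in $W$. Because $W$ is irreducible its reflection representation on $V$ is irreducible, so by Schur's lemma $Z(W)$ consists of scalar matrices and is cyclic, $Z(W)=\langle\zeta I\rangle$ with $\zeta=e^{2\pi i/d}$ and $d=|Z(W)|$. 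The element $\beta$ arises as the loop in $Y_W$ traced by $t\mapsto e^{2\pi i t/d}x_0$, $t\in[0,1]$, which closes up in $Y_W$ precisely because its endpoint $\zeta x_0$ lies in the $W$-orbit of $x_0$ via the scalar $\zeta I$; hence $q(\beta)=\zeta I$ generates $Z(W)$, consistently with $q(\beta)^d=q(\pi)=1$ (recall $\pi=\beta^d\in P$).

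Now the reduction itself. A surjection carries center into center, so for $z\in Z(B)$ we get $q(z)\in Z(W)=\langle q(\beta)\rangle$; write $q(z)=q(\beta)^k$. Then $z\beta^{-k}\in\ker q=P$, and being a product of central elements it still lies in $Z(B)$, so it commutes with all of $P\subseteq B$ and therefore $z\beta^{-k}\in Z(P)$. If one knows that $Z(P)=\langle\pi\rangle=\langle\beta^d\rangle$, this forces $z\beta^{-k}=\beta^{dm}$ for some $m$, whence $z=\beta^{k+dm}\in\langle\beta\rangle$ and the argument is complete. (Strictly one only needs $Z(B)\cap P\subseteq\langle\beta\rangle$, but $Z(B)\cap P\subseteq Z(P)$ always, so the companion statement $Z(P)=\langle\pi\rangle$ is the clean sufficient input.)

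It thus remains to establish $Z(P)=\langle\pi\rangle$, which I regard as the crux. Here I would use the principal $\C^*$-bundle $X_W\to\Proj(X_W)$ given by scaling. Since $X_W$ is a $K(\pi,1)$, the long exact homotopy sequence collapses to a central extension $1\to\langle\pi\rangle\to P\to\pi_1(\Proj(X_W))\to 1$, the subgroup $\langle\pi\rangle$ being central by the usual $S^1$-rotation homotopy and infinite cyclic because $\pi$ has infinite order. Any $z\in Z(P)$ then maps to a central element of $\pi_1(\Proj(X_W))$, so the whole claim reduces to showing that the projective fundamental group $\pi_1(\Proj(X_W))$ has trivial center.

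This last point is where I expect the genuine difficulty to sit, and where irreducibility of $W$ is indispensable: a nontrivial central class in $\pi_1(\Proj(X_W))$ would reflect an extra global symmetry of the projectivized complement beyond the scaling already divided out, which an indecomposable arrangement should not admit. To make this rigorous I would lean on the Garside/dual-monoid description of $B$ that underlies the present paper: realizing $B$ as a Garside group whose Garside element is a root of $\beta$, the theory of periodic elements identifies the center with the cyclic group generated by the minimal central power of that Garside element, which is exactly $\beta$. Alternatively one invokes the centre computations of \cite{bmr, b2, dmm} directly. Either route supplies $Z(P)=\langle\pi\rangle$ and, via the reduction above, closes the proof.
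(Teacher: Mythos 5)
Your reduction of $Z(B)\subseteq\langle\beta\rangle$ to the companion statement $Z(P)=\langle\pi\rangle$ is correct as far as it goes: $q(\beta)$ does generate $Z(W)$ (this is built into the construction of $\beta$ in \cite{bmr}), and the step $z\mapsto z\beta^{-k}\in Z(B)\cap P\subseteq Z(P)$ is sound. But there is a genuine gap exactly where you yourself locate the crux, and the way you propose to close it is circular. The paper does not prove this theorem; it records it as a deep known result, conjectured in \cite{bmr} and established case-by-case through the Shephard--Todd classification: in \cite{bmr} for the infinite series $G(de,e,n)$, the rank-$2$ groups and all Shephard groups, with the remaining cases handled in \cite{b2} and by an argument of Bessis reported in \cite{dmm}. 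Moreover, the paper's logical flow is the \emph{reverse} of yours: Theorem \ref{thm:center_pure} ($Z(P)=\langle\pi\rangle$) is deduced from Theorem \ref{thm:center_b} together with the exact sequence of Theorem \ref{thm:center_short}, which is what \cite{dmm} proves. So when you write that one may ``invoke the centre computations of \cite{bmr,b2,dmm} directly'' to obtain $Z(P)=\langle\pi\rangle$, you are invoking a statement which, in those very references, is a corollary of the theorem you are trying to prove.

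The Garside route fares no better. The assertion that ``the theory of periodic elements identifies the center with the cyclic group generated by the minimal central power of that Garside element, which is exactly $\beta$'' is not a lemma from which the theorem follows: it \emph{is} the theorem (for the dual structure this is Bessis's Theorem 12.3 in \cite{b2}, cited as such in Section \ref{sec:center}). Even granting it as a citation, it cannot cover all cases, since the dual braid monoid exists only for well-generated groups, and the paper explicitly lists the irreducible groups that are not well generated: $G(de,e,n)$ with $d\neq 1$ and $e\neq 1$, the rank-$2$ groups $G_7$, $G_{11}$, $G_{12}$, $G_{13}$, $G_{15}$, $G_{19}$, $G_{22}$, and $G_{31}$; the classical Garside structure covers only the spherical Artin--Tits cases. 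Read strictly, your proposal proves the implication ``if $Z(P)=\langle\pi\rangle$ then $Z(B)=\langle\beta\rangle$'' --- a correct and even interesting reversal of the implication used in the paper, but a proof of neither statement. Your topological reformulation, that $\pi_1(\Proj(X_W))$ has trivial center, is precisely where the classification-based difficulty is concentrated, and it is left unresolved.
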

\begin{theorem}\label{thm:center_pure}\label{thm:center}  
The center $Z(P(W))$ infinite cyclic, generated by $\pi$.
\end{theorem}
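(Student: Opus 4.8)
The plan is to produce the generator $\pi$ geometrically, to bound $Z(P)$ from above by a centerlessness statement about a projectivized complement, and to use Theorem \ref{thm:center_b} only to reconcile the bookkeeping with $\beta$. First I would exploit the scalar action: since the hyperplanes of $\A_W$ are linear, the complement $X_W=X(W)$ is preserved by the free $\C^*$-action on $V$ by homotheties, and the quotient map
$$X_W \longrightarrow \Proj X_W \subset \Proj(V)$$
onto the projectivized complement is a principal $\C^*$-bundle. Writing the long exact homotopy sequence of this fibration, the fibre class is the element $\pi$; its infinite order makes $\pi_1(\C^*)=\Z \to P$ injective (equivalently, the connecting map $\pi_2(\Proj X_W)\to\pi_1(\C^*)$ vanishes), so I obtain a central extension
$$1 \longrightarrow \langle\pi\rangle \longrightarrow P \stackrel{p_*}{\longrightarrow} \pi_1(\Proj X_W) \longrightarrow 1$$
with $\langle\pi\rangle\cong\Z$ central. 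In particular $\langle\pi\rangle\subseteq Z(P)$, and since $X_W$ is a $K(\pi,1)$ the base $\Proj X_W$ is aspherical as well.

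The reverse inclusion is where the real work sits. Given $z\in Z(P)$, its image $p_*(z)$ is central in $\pi_1(\Proj X_W)$; hence, \emph{if the base group $\pi_1(\Proj X_W)$ is centerless}, then $p_*(z)=1$, so $z\in\ker p_*=\langle\pi\rangle$, and $Z(P)=\langle\pi\rangle$ is infinite cyclic as claimed. Thus the whole statement reduces to $Z(\pi_1(\Proj X_W))=1$.

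This last point is the step I expect to be the main obstacle. Where the arrangement is of fibre type I would try to propagate triviality of the center by induction on the rank of $W$, using the sub-arrangement fibrations (peeling off a hyperplane, with fibre a once-punctured lower-dimensional complement) and treating the low-rank cases by hand; in general, since reflection arrangements need not be fibre type, I would instead read the centerlessness of the base off the Garside/regular-element structure recorded in \cite{b2, bmr}.

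Finally I would reconcile the outcome with Theorem \ref{thm:center_b}. The inclusion $Z(P)\subseteq\langle\pi\rangle$ gives $Z(P)\subseteq Z(B)$ (because $\pi=\beta^{|Z(W)|}\in\langle\beta\rangle=Z(B)$), so $Z(P)=Z(B)\cap P=\langle\beta\rangle\cap P$. Since $Z(W)$ is cyclic and the image $\bar\beta$ of $\beta$ in $B/P\cong W$ generates $Z(W)$ (a recalled property of $\beta$), the least power of $\beta$ lying in $P$ is $\beta^{|Z(W)|}=\pi$; hence $\langle\beta\rangle\cap P=\langle\pi\rangle$, confirming $Z(P)=\langle\pi\rangle$ and its compatibility with $\beta^{|Z(W)|}=\pi$.
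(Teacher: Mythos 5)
Your $\C^*$-bundle setup is sound as far as it goes, and it matches the geometry the paper itself uses later (Section \ref{sec:inertia_center}): the scalar action gives a trivial bundle, so $X_W \simeq \pp(X_W)\times \C^*$, $\langle\pi\rangle\cong\Z$ sits centrally in $P$, and $P\cong \pi_1(\pp(X_W))\times\Z$. But precisely because of that product decomposition, $Z(P)\cong Z\bigl(\pi_1(\pp(X_W))\bigr)\times\langle\pi\rangle$; hence your key claim that $\pi_1(\pp(X_W))$ is centerless is not a smaller statement to which the theorem has been reduced --- it is literally equivalent to the theorem. And you do not prove it. The fibre-type induction you float is unavailable in general (reflection arrangements are mostly not of fibre type, as you concede), and ``reading centerlessness off the Garside/regular-element structure recorded in \cite{b2,bmr}'' is exactly where the entire difficulty of the Brou\'e--Malle--Rouquier center conjecture lives: it was settled only by the combined work of \cite{bmr}, \cite{b2} and \cite{dmm}. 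Invoking it at that point is re-citing the theorem, not proving it. So the proposal has a genuine gap: the single step carrying all the content is left unestablished, and it is equivalent to the statement being proved.

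For comparison, the paper makes no attempt at an independent proof: Theorem \ref{thm:center_pure} is recalled from the literature, the stated derivation being that Theorem \ref{thm:center_b} ($Z(B(W))=\langle\beta\rangle$, from \cite{bmr,b2,dmm}) together with the exact sequence \eqref{eq:short_center} of Theorem \ref{thm:center_short} (proved in \cite{dmm}) forces $Z(P(W))=Z(B(W))\cap P(W)=\langle\beta^{|Z(W)|}\rangle=\langle\pi\rangle$. Your closing ``reconciliation'' paragraph is, in substance, exactly this computation; the only ingredient you are missing is the inclusion $Z(P(W))\subseteq Z(B(W))$, which you currently deduce from the unproven centerlessness claim but which is precisely what Theorem \ref{thm:center_short} supplies. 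If you replace that claim by a citation of Theorem \ref{thm:center_short}, your final paragraph alone becomes a complete and correct proof (the paper's), and the $\C^*$-bundle discussion is then needed only for the inclusion $\langle\pi\rangle\subseteq Z(P(W))$, which the paper's route does not even require.
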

\begin{theorem} \label{thm:center_short} There is a short exact sequence:
	\begin{equation} \label{eq:short_center}
	1 \to Z(P(W) ) \to Z(B(W)) \to Z(W ) \to 1.
	\end{equation} 
\end{theorem}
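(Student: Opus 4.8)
The plan is to read the sequence off the defining group extension $1 \to P \to B \xrightarrow{p} W \to 1$, where $p$ is the covering projection $\pi_1^{top}(Y_W)\to W$ with kernel $P=\pi_1^{top}(X_W)$, using only the two cyclicity statements (Theorems \ref{thm:center_b} and \ref{thm:center_pure}) together with the relation $\beta^{|Z(W)|}=\pi$ as structural input. Throughout I write $d=|Z(W)|$.

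First I would pin down the two maps of the sequence. The left-hand map $Z(P)\to Z(B)$ is simply the inclusion: it is well defined and lands in $Z(B)$ because $Z(P)=\langle\pi\rangle=\langle\beta^{d}\rangle\subseteq\langle\beta\rangle=Z(B)$, where I invoke both cyclicity theorems and the relation $\pi=\beta^{d}$. The right-hand map is the restriction $p|_{Z(B)}$; it lands in $Z(W)$ since any $b\in Z(B)$ satisfies $p(b)p(b')=p(b')p(b)$ for every $b'\in B$, and surjectivity of $p$ then forces $p(b)$ to commute with all of $W$.

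Next I would establish exactness at the two inner terms. Injectivity on the left is immediate, being the inclusion of an infinite cyclic subgroup. For exactness at $Z(B)$, the kernel of $p|_{Z(B)}$ is $Z(B)\cap P$, and I claim this equals $Z(P)$. The inclusion $Z(P)\subseteq Z(B)\cap P$ is exactly the left-hand map just discussed, while conversely any $z\in P$ central in $B$ is a fortiori central in the subgroup $P$, giving $Z(B)\cap P\subseteq Z(P)$.

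The crux, and the step where the real input enters, is surjectivity of $p|_{Z(B)}$ onto $Z(W)$. Since $Z(B)=\langle\beta\rangle$, the image is the cyclic group $\langle p(\beta)\rangle\subseteq Z(W)$, so it suffices to show $p(\beta)$ has order exactly $d$. On one hand $p(\beta)^{d}=p(\beta^{d})=p(\pi)=1$, because $\pi\in P=\ker p$, so the order divides $d$. On the other hand, if $\beta^{e}\in\ker p=P$ with $e>0$, then $\beta^{e}\in P\cap Z(B)=Z(P)=\langle\beta^{d}\rangle$, so $\beta^{e}=\beta^{md}$; as $\beta$ has infinite order this forces $e=md$, i.e. $d\mid e$. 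Hence $p(\beta)$ has order exactly $d$, so $\langle p(\beta)\rangle$ is a subgroup of order $d=|Z(W)|$ inside $Z(W)$ and therefore is all of $Z(W)$. (In passing this re-proves that $Z(W)$ is cyclic, generated by the image of $\beta$, in agreement with Schur's lemma applied to the irreducible reflection representation.) Together with the previous paragraph this yields exactness throughout, hence the short exact sequence \eqref{eq:short_center}.
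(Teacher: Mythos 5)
Each individual step of your argument is sound: the identification $\ker(p|_{Z(B)})=Z(B)\cap P=Z(P)$, the computation that $p(\beta)$ has order exactly $d=|Z(W)|$, and the final counting argument are all valid \emph{given your inputs}. The genuine gap is circularity. You take Theorem \ref{thm:center_pure} as an input, but the paper offers no proof of Theorem \ref{thm:center_short} at all --- it quotes \cite{dmm} --- and it records the logical order explicitly: Theorem \ref{thm:center_short} is proven in \cite{dmm}, and it is Theorem \ref{thm:center_short} together with Theorem \ref{thm:center_b} that \emph{implies} Theorem \ref{thm:center_pure}. So in the architecture of the paper (and of the literature it cites), the cyclicity of $Z(P(W))$ is a downstream consequence of the very exact sequence you are asked to prove; assuming it begs the question. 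Note that the two places where your proof needs real input --- the containment $Z(P)\subseteq Z(B)$ (needed for exactness at $Z(B)$) and the divisibility $d\mid e$ for exponents $e>0$ with $\beta^{e}\in P$ (needed for surjectivity) --- are exactly the places where you invoke $Z(P)=\langle\pi\rangle=\langle\beta^{d}\rangle$. Without Theorem \ref{thm:center_pure} neither is available: a priori $P$ could contain central elements that are not central in $B$, and $\beta^{e}$ could lie in $P$ for some $0<e<d$.

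What your argument really establishes is the easy formal equivalence of Theorems \ref{thm:center_pure} and \ref{thm:center_short} modulo Theorem \ref{thm:center_b} and the relation $\beta^{|Z(W)|}=\pi$: it is the paper's implication (\ref{thm:center_short}) $+$ (\ref{thm:center_b}) $\Rightarrow$ (\ref{thm:center_pure}) run in reverse, since the sequence \eqref{eq:short_center} forces $Z(P)$ to be the index-$d$ subgroup $\langle\beta^{d}\rangle$ of $Z(B)=\langle\beta\rangle$, and conversely, as you show. That observation is not without value, but it is not a proof of Theorem \ref{thm:center_short}: the statement has no short formal derivation from the results preceding it, which is precisely why the paper cites \cite{dmm}, where it rests on the case analysis of \cite{bmr} for the infinite series and rank-$2$ groups together with an argument of Bessis completing the general case. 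To repair your proposal you would need either an independent proof that $Z(P(W))=\langle\pi\rangle$ (one not routed through the exact sequence), or direct arguments for the two facts flagged above, namely $Z(P)\subseteq Z(B)$ and the surjectivity of $Z(B)\to Z(W)$.
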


We also recall from \cite{bmr} that the center $Z(W)$ has order $$|Z(W)| = \gcd(d_1 , d_2 , \ldots , d_r )$$ 
where $d_1, \ldots, d_r$ are the degrees of $W$.

Theorems \ref{thm:center_b}, \ref{thm:center_pure} and \ref{thm:center_short} were conjectured  in \cite{bmr} and 
proved there for infinite series $G(de,e,n)$ and rank $2$ cases. Moreover in \cite{bmr} Theorem \ref{thm:center_b} is proven for all Shephard groups. 
Some of the remaining cases of 
Theorem \ref{thm:center_b}  are proved in \cite{b2} and an argument due to Bessis and reported in \cite{dmm} completes the proof of Theorem \ref{thm:center_b}. Theorem \ref{thm:center_short} is proven in \cite{dmm} and together with Theorem \ref{thm:center_b} it implies Theorem \ref{thm:center_pure}.

%\begin{theorem} 
%([BMR], [B]) The center $Z(B)$ (resp. $Z(P)$) is infinite cyclic, generated by $\beta$ (resp. $\pi$)
%and there is a short exact sequence:
%$$ 1 \to Z(P(W) ) \to Z(B(W)) \to Z(W ) \to 1. $$
%\end{theorem}

\begin{remark}
\label{minusidentity}
The center of the irreducible Coxeter groups of type $A_n$, $D_{2n-1}$ and $E_6$ is trivial, 
while it is isomorphic to $\Z/2$ for all the other irreducible Coxeter groups. The cardinality
of the center of the other irreducible finite {\rm complex} reflection groups can be determined using 
the table of the degrees given in \cite{bmr}.
\end{remark}

We recall that if $n$ is the rank of a complex reflection group $W$, the latter is called \emph{well generated} if it can be generated by $n$ reflections.
In general, for a well generated complex braid group $B$, there are many monoids such that $B$ can be presented as a group of fractions of the monoid. In several cases these monoids admit a Garside structure, that in general is not unique.

For example we have the following result for Artin-Tits groups:

\begin{theorem}[{\cite[IX, Prop. 1.29]{ddgkm}}]
Assume that $(W, \Sigma)$ is a Coxeter system of spherical type and $B$ (resp. $B^+$) is the associated Artin-Tits group (resp. monoid). Let $\Delta$ be the
lifting of the longest element of $W$. 
Then $(B^+, \Delta)$ is a Garside monoid, and $B$ is a Garside
group. The element $\Delta$ is the right-lcm of $\Sigma$, which is the atom set of $B^+$, and $\operatorname{Div}(\Delta)$ is
the smallest Garside family of $B^+$ containing $1$.
\end{theorem}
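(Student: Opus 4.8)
The plan is to reduce the statement to the verification of the standard structural conditions characterising a Garside monoid, and then to read off the group-theoretic and divisor statements. Concretely, one must check that $B^+$ is (i) \emph{atomic}: Noetherian with no nontrivial invertible elements; (ii) left- and right-\emph{cancellative}; (iii) a \emph{lattice} for divisibility, i.e.\ any two elements admit a right-lcm and a left-gcd (and symmetrically); and (iv) that $\Delta$ is a \emph{Garside element}, meaning that its left-divisors and right-divisors coincide, form a finite set generating $B^+$, and are permuted by the atoms. Given (iii), the two-sided Ore condition holds, so $B^+$ embeds into its group of fractions, which is $B$; this is precisely what makes $B$ a Garside group.

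Atomicity is immediate: the defining braid relations $\underbrace{sts\cdots}_{m_{st}}=\underbrace{tst\cdots}_{m_{st}}$ are length-homogeneous, so word length descends to a homomorphism $\ell\colon B^+\to\N$; hence $B^+$ has no nontrivial units, is Noetherian, and has atom set exactly $\Sigma$. The genuinely hard step is cancellativity, which I would prove by Dehornoy's \emph{word-reversing} technique: to each ordered pair of atoms $(s,t)$ one attaches the complement dictated by the braid relation and then establishes that the resulting reversing process is \emph{complete}, the decisive combinatorial input being a local \emph{cube condition} on triples of atoms. Once completeness is known, both cancellativity and the existence of \emph{conditional} right-lcm's follow formally. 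This is where \emph{spherical type} becomes essential: to upgrade conditional right-lcm's to genuine ones (condition (iii)) one needs any two atoms $s,t$ to have a common right-multiple, and finiteness of $W$ supplies it directly as $\underbrace{sts\cdots}_{m_{st}}$; an induction on $\ell$ then propagates the existence of right-lcm's to all pairs of elements, and dually yields left-gcd's, so that $\operatorname{Div}(\Delta)$ becomes a sublattice for divisibility.

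It remains to identify $\Delta$. Set $\Delta$ to be the image of the longest element $w_0$ under the canonical set-section $W\to B^+$ sending a reduced word to its lift; this is well defined because, by Matsumoto's theorem, two reduced expressions of $w_0$ differ only by braid moves, which hold in $B^+$. I would then show that the left-divisors of $\Delta$ are exactly the lifts of the elements of $W$, the \emph{simple elements}: indeed $\bar u$ left-divides $\Delta$ precisely when $\ell(u)+\ell(u^{-1}w_0)=\ell(w_0)$, which holds for every $u\in W$. The same computation performed on the right shows that the left- and right-divisors of $\Delta$ coincide, and that divisibility on $\operatorname{Div}(\Delta)$ reproduces the weak order on $W$. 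Since every simple reflection is a descent of $w_0$, the element $\Delta$ is the right-lcm of $\Sigma$; and since the involution $w\mapsto w_0 w w_0^{-1}$ permutes the simple reflections, conjugation by $\Delta$ permutes the atoms, so $\Delta\Sigma=\Sigma\Delta$. Together these facts exhibit $\Delta$ as a Garside element and show that $\operatorname{Div}(\Delta)$ is closed under the relevant quotient operations, hence is the smallest Garside family containing $1$. The main obstacle is thus entirely concentrated in completeness of the reversing process (equivalently, in direct cancellativity of $B^+$): everything downstream of it follows from the finiteness of $W$ and the elementary combinatorics of $w_0$.
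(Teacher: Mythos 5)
The paper itself gives no proof of this statement: it is imported verbatim from \cite{ddgkm} (IX, Prop.~1.29), so your attempt can only be measured against the standard proof in that reference. Your overall architecture — length homomorphism for atomicity, completeness of word reversing (cube condition) for cancellativity and conditional lcm's, $w_0$-combinatorics for identifying $\operatorname{Div}(\Delta)$ with the lifts of $W$ — is indeed the route of the cited book and of the classical Brieskorn--Saito argument, and those parts are correct in outline.

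There is, however, one step that fails as written. You claim that, given conditional right-lcm's, it suffices that any two \emph{atoms} $s,t$ admit a common right-multiple (the alternating word of length $m_{st}$), and that ``an induction on $\ell$ then propagates the existence of right-lcm's to all pairs of elements.'' This propagation is false in general: your argument only uses $m_{st}<\infty$ for all pairs, which is strictly weaker than sphericity. The Artin-Tits monoid of affine type $\tilde A_2$ (all $m_{st}=3$) is atomic, cancellative, admits conditional lcm's, and every pair of atoms has a common right-multiple, yet the three atoms together have no common right-multiple, so the monoid is not a lattice for divisibility and is not Garside. Finiteness of $W$ must enter through the longest element: one first proves (as you do, but only in your final paragraph) that every lift $\bar u$, $u\in W$, left-divides $\Delta$, so that $\Delta$ is a common right-multiple of all of $\Sigma$; then, since conjugation by $\Delta$ permutes the atoms and hence preserves $B^+$, every element of length $n$ left-divides $\Delta^n$, so \emph{any} two elements admit a common right-multiple; only now do conditional lcm's plus Noetherianity yield the lattice condition (iii), and the Ore embedding. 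So your ingredients are sufficient, but the logical order must be reversed: the identification of $\operatorname{Div}(\Delta)$ with the lifts of $W$ has to come \emph{before} the lattice property, because it is the only place where sphericity of $W$ — as opposed to mere finiteness of the exponents $m_{st}$ — actually enters.
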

Morover, with respect to this Garside structure, we have:

\begin{proposition}{\cite[IX, Corollary 1.39]{ddgkm}}
The center of an irreducible Artin-Tits group of spherical type
is cyclic, generated by the smallest central power of the Garside element $\Delta$.
\end{proposition}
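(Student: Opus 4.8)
The plan is to prove the two inclusions $\langle \Delta^{e}\rangle \subseteq Z(B)$ and $Z(B)\subseteq \langle\Delta^{e}\rangle$ separately, where $e$ is defined as follows. Conjugation by the Garside element, $\phi\colon x\mapsto \Delta x\Delta^{-1}$, is an automorphism of $B$ that preserves the monoid $B^{+}$ and, since $\Delta$ is the right-lcm of the atom set $\Sigma$ (previous theorem), permutes $\Sigma$. As $\Sigma$ is finite, $\phi$ has finite order; let $e$ be this order. Then $\Delta^{e}$ commutes with every atom, hence with all of $B$, so $\Delta^{e}$ is central and is by construction the smallest central power of $\Delta$. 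This already yields $\langle\Delta^{e}\rangle\subseteq Z(B)$, the easy inclusion.

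For the reverse inclusion I would show that every central element is a power of $\Delta$; once this is known, centrality of $\Delta^{p}$ forces $\phi^{p}=\mathrm{id}$, hence $e\mid p$ and the element lies in $\langle\Delta^{e}\rangle$. After multiplying by a suitable central power $\Delta^{Ne}$ one may assume $z\in B^{+}$, so the claim becomes: a central positive element is a power of $\Delta$. The heart of the argument, and the only place where irreducibility of $W$ enters, is the following divisibility lemma: if $z$ is central and an atom $a$ left-divides $z$, then every atom $b$ not commuting with $a$ also left-divides $z$. To see this, note first that $z=au$ together with $za=az$ forces $u$ to commute with $a$, so $a$ both left- and right-divides $z$; then, feeding the relation $zb=bz$ into the rank-two parabolic generated by $a,b$ and using the lcm $\Delta_{ab}$ (the common value of $aba\cdots=bab\cdots$, of length $m_{ab}\ge 3$ precisely because $a,b$ do not commute) lets one extract $b$ as a left-divisor of $z$. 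Since $W$ is irreducible its Coxeter diagram is connected, so once some atom divides $z$ all of them do; as $\Delta=\mathrm{lcm}(\Sigma)$, this gives $\Delta\mid z$.

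With the lemma in hand one concludes by an induction on length. I expect the main obstacle to be that $\Delta$ itself is \emph{not} central (only $\Delta^{e}$ is), so one cannot simply strip off a single copy of $\Delta$ and remain inside $Z(B)$. The clean way around this is to phrase the conclusion in terms of the left normal form $z=\Delta^{\inf(z)}s_{1}\cdots s_{r}$ and to show that the canonical length $r$ vanishes: if $r\ge 1$ then the infimum-zero part $\zeta=\Delta^{-\inf(z)}z$ is a nontrivial positive element not divisible by $\Delta$, yet it satisfies the intertwining relation $\zeta g=\phi^{-\inf(z)}(g)\,\zeta$ coming from centrality of $z$, and rerunning the connectivity lemma for $\zeta$ forces every atom to divide $\zeta$ — a contradiction. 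Hence $z=\Delta^{\inf(z)}$, and the order-$e$ computation above finishes the proof. Conceptually, irreducibility is exactly what excludes the reducible situation, in which $B$ is a direct product, $\Delta$ factors, and the center is free abelian of rank equal to the number of irreducible factors rather than cyclic.
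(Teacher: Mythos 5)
Your proposal is correct, but there is nothing in the paper to compare it against: the paper does not prove this proposition, it simply quotes it from Dehornoy et al.\ \cite[IX, Cor.\ 1.39]{ddgkm}, where it is obtained inside the general Garside-theoretic framework (divisor lattice, quasi-center). What you have written is essentially a reconstruction of the classical direct argument of Garside and Brieskorn--Saito, and it is sound. The easy inclusion ($\Delta^e$ central, with $e$ the order of the permutation that $\Delta$-conjugation induces on the atoms) is complete as stated. Your two thinnest steps -- the rank-two ``extraction'' of $b$, and the claim that the lemma can be rerun for the twisted element $\zeta=\Delta^{-\inf(z)}z$ -- both go through, and by one and the same mechanism: if $p$ left-divides $\zeta$ and $\zeta$ satisfies $\zeta g=\psi(g)\zeta$ for an automorphism $\psi$ permuting the atoms (the case $\psi=\mathrm{id}$ being a central $\zeta$), then for every atom $q$ one has $p\preccurlyeq \zeta\preccurlyeq \zeta\psi^{-1}(q)=q\zeta$ and $q\preccurlyeq q\zeta$, hence $\operatorname{lcm}(p,q)\preccurlyeq q\zeta$; writing $\operatorname{lcm}(p,q)=qc$ and left-cancelling $q$ shows that the complement $c$ left-divides $\zeta$. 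Taking $p=a$, $q=b$ yields the alternating word $ab\cdots$ of length $m_{ab}-1$ as a left-divisor of $\zeta$; taking that word as the new $p$ and $q=a$ yields the word $ba\cdots$ of length $m_{ab}-2\ge 1$, hence $b\preccurlyeq\zeta$. Note that the twist only renames the atom you push across $\zeta$ (you multiply on the right by $\psi^{-1}(q)$ to create $q$ on the left), so adjacency and the connectivity argument are untouched -- this is exactly the point your sketch asserts without proof, and it is fine. (Your preliminary observation that $z=au$ central forces $au=ua$ is true but is not actually used; the lcm-and-cancel step is the whole content.) Compared with the paper's citation, your route buys a self-contained elementary proof using only cancellativity, existence of lcm's, and the $\Delta$-normal form; what it does not give is the surrounding structural information (quasi-center, behaviour under direct products, uniqueness of the Garside family) that the cited general theory delivers at the same price.
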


All the finite Coxeter groups are well generated; actually the only 
irreducible complex reflection groups which are {\it not} well generated are 
\(G(de, e, n)\) for $d \neq 1$ and $e \neq 1$,
some groups of rank 2 (namely $G_7$, $G_{11}$, $G_{12}$, $G_{13}$, $G_{15}$, $G_{19}$ and $G_{22}$), 
and $G_{31}$. Now if $W$ is  well generated, $B(W)$ can be equipped with another Garside structure, unrelated to the previous one: for any Coxeter element $c \in W$ we can define a  
\emph{dual braid monoid} Garside structure on $B(W)$, see \cite[Theorem 8.2]{b2} and $\beta$ appears 
as the {\it smallest central power of the Garside element} with respect to that structure (see \cite[Theorem 12.3]{b2}). 

\begin{example}
In the classical braid group on $n$ strands ${\rm Br}_n$, let $\sigma_i$, $i = 1, \ldots, n-1$ be the standard 
set of generators. The Garside element according 
to the natural Garside structure reads 
$$\Delta = \sigma_1 (\sigma_2 \sigma_1) \cdots (\sigma_{n-1}\cdots \sigma_1),$$
representing a global half twist. Let $c \in \mathfrak{S}_n$ be the Coxeter element $(1,2,\ldots, n)$. The Garside element according to the \emph{dual} braid monoid 
Garside structure is
$$\Delta^*= a_{1,2} \cdots a_{n-1,n}, $$
where $a_{i,j} = \sigma_i \cdots \sigma_{j-2} \sigma_{j-1} {\sigma_{j-2}}^{-1} \cdots \sigma_i^{-1}.$ 
So we can rewrite $\Delta^*$ as
$$\Delta^*= \sigma_{1} \cdots \sigma_{n-1}.$$
Setting the $n$ points at the vertices of a regular $n$-gon, $\Delta^*$ thus represents
a $1/n$-th twist (see for example \cite[I, Sec. 1.3 and IX, Prop. 2.7]{ddgkm}). One thus finds or confirm that $\Delta^2 = (\Delta^*)^n$ 
is a generator of the center of ${\rm Br}_n$. Since $\Delta^2$ belongs to the pure braid group ${\rm PBr}_n$, 
it is also a generator of its center.
\end{example}

\begin{example}
The Artin group of type $D_n$ is the braid group $B(W)$ associated to the reflection group $W= W_{D_n}$, 
also denoted by $G(2,2,n)$ in the Shephard-Todd classification. The group $B(W_{D_n})$ is generated 
by the elements $s_1, s_1', s_2, \ldots, s_{n-1}$ with $s_1, \ldots, s_{n-1}$ and $s_1', s_2, \ldots, s_{n-1}$ satisfying 
the relations of ${\rm Br}_n$, together with $s_1s_1' = s_1's_1$. 

The Garside element is 
$$\Delta = (s_1 s_1' s_2 \cdots s_{n-1})^{n-1}$$ 
and the center of the full braid group
is generated by $\Delta$ if $n$ is even, by $\Delta^2$ if $n$ is odd (see Table 5 in \cite[Appendix I]{bmr}). 
The center of the pure Artin-Tits group $P(W_{D_n})$ is generated by $\Delta^2$.
According to \cite[Section 5.1]{b1} the Garside element corresponding to a dual Garside monoid 
structure for $D_n$ is
$$\Delta^* = (s_1s_1's_3s_5\cdots)(s_2s_4\cdots)$$
and the image of $\Delta^*$ in $W$ is the Coxeter element $c.$
\end{example}

\begin{example}
The Artin group of type $G_2$ is the braid group associated to the reflection group $W_{G_2}$.
The group $B(W_{G_2})$ is generated by two elements $s, t$ satisfying the relation $(st)^3=(ts)^3$. 
The Garside element is $\Delta = (st)^3$ (see Table 5 in \cite[Appendix I]{bmr}) which is also the generator of the 
center. The center of the pure Artin-Tits group $P(W_{G_2})$ is generated by $\Delta^2$, while the Garside element 
corresponding to a dual Garside monoid structure (see \cite[Section 5.1]{b1}) is $\Delta^* = st$, whose image in the Coxeter group is the Coxeter element $c$.
\end{example}

We remark that there is no relation between the natural Garside structure (that is known only for finite type Artin-Tits groups) and the dual Garside structure (that are defined for all well generated complex braid groups). Moreover, several dual Garside structures can be defined, one for every Coxeter element. Nevertheless, the Garside element in those structures are related, since their smallest central power generate the center, that is cyclic. 

\section{Inertia and the center}
\label{sec:inertia_center}

Let again $W$ be an irreducible complex reflection group, $\A=\A_W$ the corresponding hyperplane
arrangement which we assume to be essential (see the definition in Section \ref{subsec1dpm}), in the complex vector space $V$ of dimension $n$. 
We write as usual
$X= X_W= V\setminus \A_W$,
$\overline{X}=\overline{X}_W$ for the associated minimal wonderful model, obtained
by a finite sequence of blowups of $V$ viewed as a stratified variety, along strata with non decreasing dimensions.
We let $P=P(W)=\pi_1^{top}(X_W)$ denote the pure braid group associated with $W$.

In particular, let $X_0$ be the first step in this  process, namely the blowup of the space $V$ 
at its origin 0; let $\Dc_0 \subset X_0$ be the corresponding exceptional divisor. 
We write $\tilde A$ for the proper transform in $X_0$ of a subspace $A \subset V$ and identify $X$ with
the complement in $X_0$ of the proper transforms of the hyperplanes in $\A$ and of $\Dc_0$:
$$X \simeq X_0 \setminus \left( \Dc_0 \cup \bigcup_{H \in \A} \tilde{H}  \right).$$

Using this we denote by $z \in P$ the topological {\it inertia} around the divisor $\Dc_0$. 
It can be viewed as the homotopy class of a counterclockwise loop in
$X_0 \setminus \bigl( \bigcup_{H \in \A} \tilde {H}  \bigr) $ around $\Dc_0$, identified with a loop in $X$ 
(cf. e.g. \cite[Appendix I]{bmr}). Here we use the natural complex orientation of the normal bundle of a hypersurface 
in a complex variety. Note that $z$ is {\it a priori} defined, as it should be, as a {\it conjugacy class} in the pure 
braid group $P$; we did not specify a basepoint for the fundamental group $\pi_1^{top}(X)\simeq P$ 
(see \S \ref{sec:galois} below). However $z$ will be shown to be central in $P$ and so {\it a posteriori} 
it turns out to be well-defined as an element of $P$.
Note as well that $z$ also represents the inertia around  the divisor 
%$\cD=\overline{X}\setminus X$: 
\(\Dc_V\) in \(\overline{X}\):
it is the homotopy 
class of a loop in the big open part of $X_0$, which identifies with that of $\overline{X}$; indeed they both 
 identify with $X$.

 We recall from the definition of blow up (see for example \cite[Chapter 1.4 ]{gr-har}) that $X_0$ is defined as the closure of the image of the map
$$V \setminus \{ O \} \to V \times \pp(V),$$
therefore there is a well-defined projection $\pi: X_0\rightarrow\pp(V)$. It defines a line bundle
$$
\xymatrix{
	\C \ar[r] & X_0\ar[d]^{\pi} \\
	& \pp(V) }
$$
whose 
0-section is precisely the divisor $\Dc_0$. This line bundle is the normal bundle of $\Dc_0$
in $X_0$ and a representative of $z$ is given by a loop around the origin in the fiber at a generic point.

If we fix a hyperplane $H \in \A$, we can  restrict  the line bundle to 
%a subset of 
the complement of 
$\pp(H)$ in $\pp(V)$, which is affine. The fibre over any point $[v] \in \pp(V)\setminus \pp(H)$ is a line 
$l \times \{ [v]\}= \{ (\lambda v,[v]) \mid \lambda \in \C \} \subset V \times \pp(V)$. We can fix a translation 
$H+\delta$ of $H$ in $V$ that doesn't contain the origin. Then $H+\delta$ intersects the line $l$ in a point 
$p_{[v]} = l \cap (H+\delta).$
% which is identified with the fiber over any $[v] \in \pp(V)$. 
This defines a nonzero section of $\pi$ that trivializes our restricted line bundle and 
we thus get a {\it trivial} bundle:
$$
\xymatrix{
	\C \ar[r] & X_0 \setminus \tilde H\ar[d]^{\pi} \\
	& \pp(V)\setminus \pp(H). }
$$

Now recalling that $X\subset V$, consider the restriction of $\pi$ to the preimage of $\pp(X)$, namely:
$$\pi: X_0 \setminus \bigl( \bigcup_{H \in \A} \tilde {H}  \bigr) \rightarrow\pp(X).$$
From the argument above, since $X$ is contained in the complement of an hyperplane 
(simply choose any of the hyperplanes in the arrangement $\A$), we have that the restriction is actually a trivial line bundle. 

Moreover, since $\Dc_0$ is the $0$-section, we can restrict to $X\rightarrow \pp(X)$ and we obtain the trivial fiber bundle
$$
\xymatrix{
	\C^* \ar[r] & X \ar[d]^{\pi} \\
	& \pp(X ).}
$$
whose fiber is the punctured affine line ($\simeq\C^*$, or ${\mathbb{G}}_m$ in more algebraic notation).

So $X$ factors as $X\simeq \pp(X)\times {\mathbb{G}}_m$ and the inertia element $z$ 
is represented by a nontrivial loop in the second factor. Given the base point $x_0 \in X \subset V$, 
we can choose (cf. \cite[\S 2.A]{bmr}) as a representative of $z$ the map 
$$
t \mapsto ([x_0], e^{2 \pi \imath t}).
$$

This way we have essentially proved

\begin{theorem}
\label{thm:inertia_center}
The inertia element $z=z_W$ generates the center of $P(W)$.
If  the group $B(W)$ is an Artin-Tits group equipped with the classical Garside structure or a well-generated complex braid group equipped  with the dual Garside structure
% equipped with a Garside structure (for instance if it is well generated),  
$z$ represents, up to orientation, 
the smallest power of the Garside element that belongs to  \(P(W)\) and is central.
%the smallest central power of the Garside element.
\end{theorem}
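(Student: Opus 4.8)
The plan is to prove Theorem \ref{thm:inertia_center} in two logically separate halves. The first half---that $z$ generates the center of $P(W)$---is essentially a geometric computation already carried out in the body of the section, and I would simply consolidate it. The second half---that $z$ coincides, up to orientation, with the smallest central power of the Garside element lying in $P(W)$---requires matching the topological inertia element against the algebraically defined generators $\beta$ and $\pi$ from Section \ref{sec:center}.

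For the first half, I would argue directly from the product decomposition $X \simeq \pp(X) \times {\mathbb{G}}_m$ established just above the statement. Under this isomorphism the fundamental group splits as $\pi_1^{top}(X) \simeq \pi_1^{top}(\pp(X)) \times \pi_1^{top}({\mathbb{G}}_m)$, and $z$ is precisely the positive generator of the second factor $\pi_1^{top}({\mathbb{G}}_m) \simeq \Z$. Since the ${\mathbb{G}}_m$-factor is a direct factor, its generator is automatically central in the product, so $z \in Z(P(W))$. The remaining point is that $z$ \emph{generates} the whole center, not merely a subgroup of it. Here I would invoke Theorem \ref{thm:center_pure}, which tells us $Z(P(W))$ is infinite cyclic generated by $\pi$; it then suffices to check that $z$ is a primitive element, i.e.\ not a proper power in $P(W)$. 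Primitivity follows because the projection $\pi_1^{top}(X) \to \pi_1^{top}({\mathbb{G}}_m) \simeq \Z$ sends $z$ to a generator of $\Z$, so $z$ cannot be written as $w^k$ with $k>1$. Thus $z$ generates $Z(P(W))$ and, up to orientation, $z = \pi^{\pm 1}$.

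For the second half I would reconcile this with the Garside picture. From the discussion in Section \ref{sec:center}, with respect to the classical Garside structure on an Artin--Tits group the smallest central power of $\Delta$ generating $Z(B(W))$ is $\beta$ (or $\beta^{|Z(W)|}=\pi$ when one restricts to the pure group), and with respect to the dual Garside structure on a well generated complex braid group $\beta$ again appears as the smallest central power of the dual Garside element $\Delta^*$ (by \cite[Theorem 12.3]{b2}). In either case the smallest power of the Garside element that is both central and lies in $P(W)$ generates $Z(P(W))$, which by Theorem \ref{thm:center_pure} is the infinite cyclic group generated by $\pi$. Since we have just identified $z$ with $\pi^{\pm 1}$, the two agree up to orientation. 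Concretely I would observe that in the classical case $\beta^{|Z(W)|}=\pi$ with $\pi \in P(W)$ central, and that no smaller power of $\beta$ lies in $P(W)$ (a power $\beta^j$ lands in $P(W)$ exactly when its image in $W$ is trivial, which by the exact sequence \eqref{eq:short_center} happens precisely for $j$ a multiple of $|Z(W)|$); this pins down $\pi$ as the relevant smallest central power in $P(W)$.

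The main obstacle, and the place where the argument is genuinely doing work rather than bookkeeping, is the \emph{identification of the topological inertia $z$ with the algebraic generator $\pi$ up to orientation}, as opposed to merely showing both generate the same cyclic group. Showing that two generators of $\Z$ coincide up to sign is of course trivial once both are known to be generators; the real content is the implicit geometric claim that the Garside half-twist $\Delta$ (respectively the dual element $\Delta^*$) is realized, via the $K(\pi,1)$ identification $\pi_1^{top}(Y_W)=B(W)$, by a loop whose inertia contribution around $\Dc_0$ is the fiberwise rotation in the ${\mathbb{G}}_m$-factor. Making this precise requires knowing that the central element $\beta$ acts as a global rotation of the configuration---which is exactly what the half-twist and $1/n$-twist descriptions in the Examples of Section \ref{sec:center} encode---and that this global rotation is, under the blowup at the origin, the loop encircling the exceptional divisor $\Dc_0$. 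I would handle this by noting that the generator of $\pi_1^{top}({\mathbb{G}}_m)$ is the class of the scalar multiplication loop $\lambda \mapsto e^{2\pi\imath t}\lambda$ on $V$, and that this is precisely the monodromy generating $Z(P(W))$ in the standard description of \cite{bmr, b2}; the orientation ambiguity is then exactly the freedom of replacing a counterclockwise loop by a clockwise one, which is why the statement is only asserted up to orientation.
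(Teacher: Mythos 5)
Your proposal is correct and follows essentially the same route as the paper, whose entire proof is a one-line appeal to exactly the three ingredients you use: the factorization $X\simeq \pp(X)\times {\mathbb{G}}_m$, Theorem \ref{thm:center_pure} on $Z(P(W))$, and the Garside facts recalled in Section \ref{sec:center}; your first two paragraphs simply make that compressed argument explicit (centrality and primitivity of $z$ from the splitting, then $\pi=\beta^{|Z(W)|}$ as the smallest central power of the Garside element lying in $P(W)$). The worry raised in your final paragraph is unnecessary: once $z$ and $\pi$ are both known to generate the infinite cyclic group $Z(P(W))$, the identification up to orientation is automatic, which is precisely why the statement carries the ``up to orientation'' caveat.
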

\begin{proof}
This immediately follows  from the factorization $X\simeq \pp(X)\times {\mathbb{G}}_m$, 
the description of $Z(P(W))$ given in Theorem \ref{thm:center_pure}
and the  results on well generated groups recalled in Section \ref{sec:center}.
\end{proof}

The same argument of Theorem \ref{thm:inertia_center} will be used inductively (i.e.~applied to subspaces and subarrangements) in the next section to show the relation between the inertia around the other divisors and the center of the corresponding parabolic subgroups.

%Note that thanks to this result the element $z\in P_W$ 
%is {\it a posteriori} well-defined, 
%not only up to conjugacy. 

\section{Inertia and the divisor at infinity}
\label{sec:inertia_divisor}

With the same setting as above, let $A \in \Fc_W$ be an irreducible subspace and define the parabolic subgroup
$$W_A = \{ w \in W \mid w \mbox{ fixes } A^\perp \mbox{ pointwise}\}.  $$

Noting   that $W_A$ is itself an irreducible complex reflection group with an essential action on $A$ 
we let as usual $\A_{W_A}$ be the associated hyperplane arrangement in $A\subset V$ and by $X_{W_A}=X(W_A)$ its 
complement in $A$. Note that the ambient space $A$ is not made explicit in the notation but  this is 
harmless in the sequel. If we define, as is usual in the theory of hyperplane arrangements:  
$$ (\A_W)_{A^\perp} = \{H \in \A_W \mid A^\perp \subset H\},$$ 
then  the hyperplanes in   $\A_{W_A}$  
can be seen as  the intersections with \(A\) of the hyperplanes of  \((\A_W)_{A^\perp}\). 
Further, we will denote, according to the standard notation used for hyperplane arrangements, by $(\A_W)^{A^\perp}$ 
the hyperplane arrangement in 
$A^\perp\subset V$ defined by the intersections with $A^\perp$ of those hyperplanes of the original
arrangement $\A_W$ which do not contain \(A^\perp\).

With this setup we can now generalize the construction of the previous section and define an inertia generator
%subgroup
(or rather a conjugacy class) attached to any irreducible subspace $A$ of the arrangement $\A_W$:

\begin{definition} 
\label{def:inertia}
The inertia class $z_A\in P(W)$ associated with the divisor $\cD_A\subset \cD$, is the homotopy class of a 
counterclockwise loop around $\cD_A$ in the big open part of $\overline{X}_W$ (which can be identified with $X_W$).
\end{definition}

Now consider the restriction to $X_W=X(W)$ of the natural projection 
\begin{equation}\label{eq:p_ort}
\pi_{A^\perp}: V \to A^\perp.
\end{equation}
Intersecting $X(W)$ with an open tubular neighbourhood of $A^\perp \subset V$ we find that the
fiber over a point of the complement of  \((\A_W)^{A^\perp}\)  in $A^\perp$ is isomorphic to $X(W_A)$: 
if the tubular neighborhood is small enough, the hyperplanes of \(\A\) that do not fix \(A^\perp\) 
are ``far away'' from this fiber.

This yields a natural inclusion 
\begin{equation}\label{eq:incl}
i_A: X(W_A) \hookrightarrow X=X(W).
\end{equation}

Let $z_{W_A}\in Z(P(W_A))\subset P(W_A)$ denote the inertia element as constructed in the previous section,
using $W_A$ (resp. $A$) instead of $W$ (resp. $V$).  More precisely, if we denote by $\cD_{A,W_A}$ the divisor in $\overline{X}(W_A)$  associated  to the maximal element of the building set, that is \(A\),  then $z_{W_A}$ is the inertia around  $\cD_{A,W_A}$.
\begin{remark}
\label{rem:parabolici}
Here and in the sequel we will indicate with \(P(W_A)\) two isomorphic groups: the pure braid group associated with the complex reflection group \(W_A\) and the parabolic subgroup of \(P(W)\) associated with the subspace \(A\). It will be clear from the context which is the group  we are dealing with.
\end{remark}

\begin{theorem}
\label{thm:injection}
The map 
$$(i_A)_*: P(W_A)\rightarrow P(W)$$
induced by the inclusion $i_A: X(W_A) \hookrightarrow X=X(W)$ is {\rm injective}.
It maps the inertia element $z_{W_A}\in P(W_A)$ to $z_A\in P=P(W)$. 
\end{theorem}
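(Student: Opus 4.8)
The plan is to establish the two assertions separately: first the injectivity of $(i_A)_*$, and then the identification of the image of the inertia generator. The geometric input is the inclusion $i_A: X(W_A) \hookrightarrow X(W)$ coming from realizing $X(W_A)$ as a fiber of the projection $\pi_{A^\perp}$ over a generic point of the complement of $(\A_W)^{A^\perp}$ in $A^\perp$, inside a small tubular neighborhood of $A^\perp$.

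For injectivity, I would exploit the fibration structure suggested by the projection $\pi_{A^\perp}: V \to A^\perp$. Restricting $\pi_{A^\perp}$ to (an appropriate neighborhood inside) $X(W)$ exhibits a locally trivial fibration whose fiber is $X(W_A)$ and whose base is the complement of $(\A_W)^{A^\perp}$ in $A^\perp$, which is itself the complement of a hyperplane arrangement and hence a $K(\pi,1)$ with torsion-free fundamental group. The long exact homotopy sequence of this fibration then reads
$$
\pi_2(\text{base}) \to \pi_1(X(W_A)) \xrightarrow{(i_A)_*} \pi_1(X(W)) \to \pi_1(\text{base}) \to 1.
$$
Because the base is a $K(\pi,1)$ (a hyperplane-arrangement complement), $\pi_2(\text{base})=0$, so the map $(i_A)_*$ on $\pi_1$ is injective. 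This is the cleanest route: injectivity is essentially forced by the vanishing of $\pi_2$ of an aspherical base. One should check that the neighborhood can be chosen so that the restriction of $\pi_{A^\perp}$ is genuinely a fiber bundle (e.g. by Thom's first isotopy lemma or by an explicit local product structure coming from the transversality of the far-away hyperplanes), but this is standard for arrangement complements.

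For the second assertion, that $(i_A)_*$ carries $z_{W_A}$ to $z_A$, the idea is to match the two loops geometrically. By construction $z_{W_A}$ is the inertia around the divisor $\cD_{A,W_A}$ associated with the top element $A$ of the building set for $W_A$, which by the argument of Theorem~\ref{thm:inertia_center} is represented (in the factorization $X(W_A)\simeq \pp(X(W_A))\times {\mathbb{G}}_m$) by a small counterclockwise loop in the ${\mathbb{G}}_m$-factor, i.e. a loop encircling the origin of $A$ inside the fiber. Under $i_A$, that origin is exactly the generic point $p \in A^\perp$ over which we took the fiber, and a small loop around $0$ in $A\cong$ the fiber direction is precisely a small counterclockwise loop around the exceptional divisor $\cD_A$ in the blowup picture for $\overline{X}_W$. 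I would make this precise by observing that the first blowup of $V$ along $A^\perp$ (which occurs in the wonderful-model construction, since $A^\perp\in\Fc_\A^\perp$) produces an exceptional divisor $\cD_A$ whose normal direction agrees, near the chosen fiber, with the ${\mathbb{G}}_m$-direction defining $z_{W_A}$; the complex orientations match, so the two counterclockwise loops are homotopic in $X(W)$.

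The main obstacle I anticipate is the orientation- and normalization-matching in this last step: one must verify that the loop around $\cD_A$ used to \emph{define} $z_A$ in $\overline{X}_W$ corresponds, through the identifications $X(W)\simeq X_0\setminus(\cdots)\simeq$ big open part of $\overline{X}_W$, to the meridian of $A^\perp$ that the fibration $\pi_{A^\perp}$ produces as the generator of the fiber's inertia. Concretely this means tracking how the proper transform of $A^\perp$ under the sequence of blowups relates to the tubular-neighborhood meridian, and checking that no extra inertia around intermediate divisors $\cD_B$ (for $B\supsetneq A$ nested with $A$) gets inadvertently mixed in when the loop is taken small enough to lie in the big open part. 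Once one knows the meridian can be taken entirely within the fiber $X(W_A)$ and that this fiber meets $\cD_A$ transversally in one point with the correct orientation, the equality $(i_A)_*(z_{W_A})=z_A$ follows.
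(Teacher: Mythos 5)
Your injectivity argument has a fatal conflation of spaces. The locally trivial fibration you invoke (which does exist, with some care about letting the tube radius vary) has as total space the tubular--neighborhood piece $N=(\text{tube around }A^\perp)\cap X(W)$ lying over $M=A^\perp\setminus\bigcup(\A_W)^{A^\perp}$, \emph{not} $X(W)$ itself: the honest affine fibers $\pi_{A^\perp}^{-1}(x)\cap X(W)$ are not copies of $X(W_A)$, since they also carry the traces of the hyperplanes not containing $A^\perp$ --- that is exactly why you must shrink to a neighborhood. Consequently the long exact sequence reads
$$\pi_2(M)\to\pi_1(X(W_A))\to\pi_1(N)\to\pi_1(M)\to 1,$$
with $\pi_1(N)$, not $\pi_1(X(W))$, in the middle. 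At best this gives injectivity of $\pi_1(X(W_A))\to\pi_1(N)$; but the inclusion $N\hookrightarrow X(W)$ could a priori kill loops, and proving that it does not (on the image of $\pi_1(X(W_A))$) is precisely the content of the theorem, so nothing has been gained. Moreover your justification that $\pi_2(M)=0$ because $M$ ``is the complement of a hyperplane arrangement and hence a $K(\pi,1)$'' is false as a general principle: arrangement complements need not be aspherical (generic arrangements are the standard counterexample), and the restricted arrangement $(\A_W)^{A^\perp}$ is in general not a reflection arrangement, so no known asphericity theorem applies to it. Thus even the weaker injectivity into $\pi_1(N)$ is not established by your argument.

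The paper avoids both problems by mapping in the opposite direction: it enlarges the target rather than fibering over a base. Namely, it composes $i_A$ with the inclusion $X(W)\hookrightarrow X((\A_W)_{A^\perp})$, where $(\A_W)_{A^\perp}$ is the (non-essential) subarrangement of hyperplanes containing $A^\perp$; that complement splits as $X(W_A)\times A^\perp$, and the composite $X(W_A)\hookrightarrow X(W)\hookrightarrow X((\A_W)_{A^\perp})$ is the inclusion of a scaled fiber slice, hence a homotopy equivalence. An isomorphism on $\pi_1$ factoring through $(i_A)_*$ forces $(i_A)_*$ to be injective --- no fibration and no asphericity needed. For the second assertion your plan is essentially the paper's argument, but the ``main obstacle'' you flag (that the meridian can be taken inside the fiber without picking up inertia of intermediate divisors) is the actual mathematical content, and you leave it unresolved. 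The paper settles it by observing that the only blowups in the construction of $\overline{X}(W)$ that affect the fiber ball $U_x$ are those along (proper transforms of) subspaces containing $A^\perp$, so the proper transform $\overline{U}_x$ is a copy of the wonderful model $\overline{X}(W_A)$, with $\overline{U}_x\cap\cD_A$ corresponding to $\cD_{A,W_A}$; a meridian of $\cD_{A,W_A}$ in $\overline{U}_x$ is then literally a meridian of $\cD_A$ in $\overline{X}(W)$, which is the desired equality $(i_A)_*(z_{W_A})=z_A$.
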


\begin{proof}
We start by showing that $(i_A)_*$ is injective. This fact is already known from \cite[Section 2.D]{bmr}, but we reprove it using our notation. To this end consider the sub-arrangement 
$(\A_W)_{A^\perp}$ in \(V\) defined above 
% given by the hyperplanes 
%{\it containing} $A^\perp$:
%$$ (\A_W)_{A^\perp} = \{H \in \A_W \mid A^\perp \subset H\}.$$
(notice that $(\A_W)_{A^\perp}$ is {\it not} essential). The respective complements
in the ambient space $V$ determine the inclusion  $X(W) \subset X((\A_W)_{A^\perp})$ and 
the composition 
$$X(W_A) \stackrel{i_A}{\hookrightarrow} X(W) \hookrightarrow X((\A_W)_{A^\perp})$$
is easily seen to be a homotopy equivalence, hence induces an isomorphism on fundamental groups.
This shows that \((i_A)_*\) is injective.

Moreover (see also \cite[Section 2.D]{bmr}) the standard generators of $P(W_A)$, which are given by loops in $X(W_A)$ around hyperplanes, 
map via $i_A$ to loops in $X$ around hyperplanes and these determine standard generators of $P(W)$.

Let now $x$ be  a point of the complement of  \((\A_W)^{A^\perp}\)  in $A^\perp$. Let us consider the affine subspace 
$A_x=\pi_{A^\perp}^{-1}(x)\subset V$ and the intersection $U_x= A_x \cap B_x$ with an
open ball $B_x$ centered at $x$, small enough  to avoid the hyperplanes not containing $x$. 
Denote by $\overline{U}_x$ the proper transform of $U_x$ in $\overline{X}$. Then $\overline{U}_x$
is isomorphic to $\overline{X}(W_A)$, since of all the blowups that contribute to  the construction of \(\overline{X}(W)\) 
only the ones that involve subspaces  that contain \(A^\perp\) have an effect on $U_x$. Furthermore,  
the big open parts of the varieties $\overline{U}_x$
and  $\overline{X}(W_A)$ are identified by the projection \(\pi_A\: : \: V\rightarrow A\). In the isomorphism 
mentioned above  the intersection $\overline{U}_x\cap \cD_A$ 
corresponds to  the divisor $\cD_{A,W_A}$ in $\overline{X}(W_A)$.

We then notice that a loop in $\overline{U}_x $ around 
$\overline{U}_x\cap \cD_A$  is also  a  loop in $\overline{X}$ around $\cD_A$, which is tantamount to saying that $i_A$ 
maps a representative of the inertia $z_{W_A} \in P(W_A)$, 
that lies in the big open 
part of \(\overline{X}(W_A)\),  to a loop that is homotopic to a representative of $z_{A} \in P(W)$.
\end{proof}

\begin{remark}
Take another wonderful model associated with the arrangement $\A_W$, for example the maximal one 
(see Remark \ref{maximalremark}), and consider the inertia class $z_A$ around the divisor 
$\cD_A$, which is a component of the divisor at infinity of our given model and is  
associated to a {\it reducible} (i.e. not irreducible) subspace $A$.  Then if $A$ decomposes as a direct
sum of irreducible subspaces, $A = A_1 \oplus A_2 \oplus \cdots \oplus A_k$, its associated inertia class 
can be written as a product of commuting factors:
$$z_A =  z_{A_1} z_{A_2}\cdots z_{A_k}  .$$
\end{remark}

The following statement is an immediate corollary of Theorems \ref{thm:inertia_center} and \ref{thm:injection}:

\begin{cor} 
The inertia class $z_A\in P(W_A)\subset P$ attached to the irreducible divisor $\cD_A\subset \cD\subset \overline{X}$ 
generates the center of the parabolic subgroup $P(W_A)\subset P$. If moreover 
$B(W_A)$ is an Artin-Tits group equipped with the classical Garside structure or a well-generated complex braid group equipped with the dual braid monoid Garside structure,
up to a change of orientation, $z_A$ is (the image via $\pi_1(i_A)$ of) 
%the smallest central power of the Garside element of $P(W_A)$.
the smallest power of the Garside element of $B(W_A)$ that belongs to \(P(W_A)\) and is central.
\end{cor}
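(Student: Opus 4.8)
The plan is to obtain everything by transporting Theorem~\ref{thm:inertia_center} along the injection of Theorem~\ref{thm:injection}, applied not to $W$ but to the parabolic reflection group $W_A$. Recall from the opening of Section~\ref{sec:inertia_divisor} that $W_A$ is itself an irreducible complex reflection group acting essentially on $A$; hence Theorem~\ref{thm:inertia_center}, whose hypotheses were precisely irreducibility and essentiality, applies verbatim with $(W_A, A)$ in place of $(W, V)$. It yields that the inertia element $z_{W_A}$ generates the center $Z(P(W_A))$ and, in the two Garside situations, represents up to orientation the smallest central power of the Garside element of $B(W_A)$ lying in $P(W_A)$.

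Next I would feed this into Theorem~\ref{thm:injection}. That theorem provides an injective homomorphism $(i_A)_* : P(W_A) \to P(W)$ sending $z_{W_A}$ to $z_A$. Being injective, $(i_A)_*$ restricts to an isomorphism of $P(W_A)$ onto its image, which is by definition (cf. Remark~\ref{rem:parabolici}) the parabolic subgroup $P(W_A) \subset P$. Since a group isomorphism carries the center onto the center, one has $Z\big((i_A)_* P(W_A)\big) = (i_A)_*\big(Z(P(W_A))\big)$; as the right-hand side is generated by $(i_A)_*(z_{W_A}) = z_A$, the first assertion of the corollary follows. For the Garside statement it then suffices to apply $\pi_1(i_A) = (i_A)_*$ to the identification furnished by Theorem~\ref{thm:inertia_center} for $W_A$: the image $z_A$ of the smallest central Garside power is exactly what the corollary describes, with the orientation ambiguity simply propagating through.

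The proof is thus genuinely immediate, and there is no single hard step; the only points demanding a little care are bookkeeping ones. First, one must read ``the center of the parabolic subgroup'' as the center computed \emph{internally} within that subgroup, so that the abstract statement ``$z_{W_A}$ generates $Z(P(W_A))$'' transfers through the isomorphism --- had one meant the centralizer of $P(W_A)$ inside the whole of $P$, the conclusion would be different and would require more than these two theorems. Second, one should keep the two meanings of the symbol $P(W_A)$ straight throughout, exactly as flagged in Remark~\ref{rem:parabolici}, checking that $z_A$ as defined in Definition~\ref{def:inertia} (a loop around $\cD_A$ in $\overline{X}_W$) really coincides with $(i_A)_*(z_{W_A})$ --- but this is precisely the second clause of Theorem~\ref{thm:injection}. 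Finally, the Garside hypothesis is imposed directly on $B(W_A)$ rather than inherited from $B(W)$, so no compatibility between the Garside structures of $W$ and of its parabolic $W_A$ need be established.
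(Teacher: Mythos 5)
Your proposal is correct and is exactly the derivation the paper intends: the paper states this as an ``immediate corollary'' of Theorems~\ref{thm:inertia_center} and~\ref{thm:injection}, and your argument --- applying Theorem~\ref{thm:inertia_center} to the irreducible, essential pair $(W_A,A)$ and transporting the conclusion through the injection $(i_A)_*$, which sends $z_{W_A}$ to $z_A$ and carries the internal center onto the internal center of the parabolic image --- is precisely that reasoning, with the bookkeeping points (internal center vs.\ centralizer, the two readings of $P(W_A)$ per Remark~\ref{rem:parabolici}) correctly handled.
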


% conclusion: inertia is the smallest power of the garside element $\Delta$ that central in the complex 
%braid group $B$, elevated to $|Z(W)|$. Examples, A_n, D_n, ... special cases when we need a small 
%power of the Garside element (e.g. |Z(W)| = 1?)

\section{Inertia in the quotient model} 
\label{sec:quotient}

Let us now consider the action of \(W\) on the spaces \(X_{W}\) and \(\overline{X}_{W}\). 
We   denote by  $Y_W$ the quotient space  $X_W/W$, which is a smooth variety (or scheme),  and by $\overline{Y}_W=\overline{X}_{W}/W$ the quotient of the  model, 
which in general  is {\it not} smooth; there may be singular points on the divisor at infinity.

So let us focus on this divisor at infinity, i.e. on the quotient of the boundary components of \(\overline{X}_{W}\).
First we notice that \(W\) acts naturally on the building set \(\Fc_W\); 
for every \(A\in \Fc_W\) and $w\in W$ we write $w(\cD_{A})=\cD_{wA}$, 
$W(\cD_{A})= \bigcup_{w \in W}\cD_{w A}$. We denote by \(\cL_A\) the quotient
$$\cL_A= W(\cD_A)/W.$$
Clearly if \(A\) and \(B\) are in the same orbit of  the action of \(W\) on \(\Fc_W\), then 
\(\cL_A=\cL_B\). It turns out that  even in the example of W=\(A_3\) the quotient divisor \(\cL_A\) is not smooth, 
but we will indicate  a dense set of smooth points (see Proposition \ref{prop:springergeneric} below).
The cardinality of the fibers of the projection map 
$$W(\cD_A) \rightarrow \cL_A$$
is determined by the cardinality of  the stabilizers of the points in \(\cD_{ A}\), 
so that we are interested in getting some information about these stabilizers.

Let us start by recalling that  in  \cite{fk03} Feichtner and Kozlov provide a description of these groups.  
In order to describe  the points in  \(\overline{X}_{W}\) they  use the following encoding  
(see also \cite{fm}) which records the information coming from the projections of \(\overline{X}_{W}\) 
onto the factors of the product  \(V\times \prod_{A\in \Fc_W}\pp(V/A^\perp)\).

Every point \(\omega \in \overline{X}_{W}\) is represented by a list:
\[\omega=(x, A_1, l_1,A_2, l_2,...,A_k,l_k)\]
where:
\begin{itemize}
	
	\item \(x\) is the  point in \(V\) given by the image of \(\omega\)  in the projection \(\pi\: : \: \overline{X}_{W}\rightarrow V\);
	\item \(A_1\) is the smallest subspace in \(\Fc_W\) that contains \(x\), and it appears in the list only if  \(A_1\neq V\). If \(A_1=V\) then  the encoding stops here: \(\omega=(x)\);
	\item \(l_1\) is the  line in \(A_1\) given by the image of \(\omega\) in the projection  \(\pi\: : \: \overline{X}_{W}\rightarrow \pp \left(V/(A_1)^\perp \right)\) (identifying \(A_1\) with \(V/(A_1)^\perp\));
	\item \(A_2\) is the smallest subspace in \(\Fc_W\) that contains \(A_1\) and \(l_1\), and it appears in the list only if  \(A_2\neq V\), otherwise the list stops : \(\omega=(x, A_1,l_1)\);
	\item \(l_2\) is the  line in \(A_2\) defined by the image of \(\omega\) in the projection  \(\pi\: : \: \overline{X}_{W}\rightarrow \pp(V/(A_2)^\perp)\) 
	
\end{itemize}

and so on...

\begin{proposition}[see  Proposition 4.2 in \cite{fk03}]
	The stabilizer $stab\ \omega \ $ of the point \(\omega=(x, A_1, l_1,A_2, l_2,...,A_k,l_k)\) is equal to 
	\[ \stab \ x \cap  \stab \ l_1 \cap  \stab \ l_2 \cap \cdots \cap  \stab \ l_k\]
	where by \( \stab \ l_i\) we mean the subgroup of \(W\) that sends \(l_i\) to itself 
(not necessarily fixing \(l_i\) pointwise), i.e. the stabilizer of the point \([l_i]\in \pp(V/(A_i)^\perp)\).
\end{proposition}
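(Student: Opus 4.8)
The plan is to realize the stabilizer computation as a purely formal consequence of two facts about the Feichtner--Kozlov encoding $\omega \mapsto (x, A_1, l_1, \ldots, A_k, l_k)$: that it is $W$-equivariant, and that it is injective (a point of $\overline{X}_W$ is determined by its encoding). Both are already implicit in the construction recalled above, so the real work is to set up the $W$-action carefully and then read off the formula.

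First I would make the $W$-action on the ambient product explicit. Since $W$ is a complex reflection group we may take the Hermitian pairing to be $W$-invariant, so that $wA^\perp = (wA)^\perp$ for every $w \in W$ and every subspace $A$; in particular $W$ permutes $\Fc_W$ by poset automorphisms, preserving inclusions, sums, and the operation ``smallest element of $\Fc_W$ containing a given set''. Consequently $W$ acts on $V \times \prod_{A \in \Fc_W} \pp(V/A^\perp)$ by acting linearly on $V$ and by sending the factor indexed by $A$ to the factor indexed by $wA$ through the isomorphism $\pp(V/A^\perp) \to \pp(V/(wA)^\perp)$ induced by $w$. The embedding $i$ of Section~\ref{subsec2dpm} is equivariant for this action, hence $\overline{X}_W$ is $W$-stable and the action on it is the restriction of the ambient one. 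The one point that requires care here is exactly this permutation of the projective factors: the symbol $\stab\ l_i$ only makes sense once one knows $wA_i = A_i$, so that $w$ genuinely acts on the factor $\pp(V/A_i^\perp)$ in which $[l_i]$ lives.

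Next I would show that the recorded subspaces $A_1, \ldots, A_k$ are redundant, being forced by $x$ and the $l_j$ through the equivariant recursion. Indeed $A_1$ is the smallest element of $\Fc_W$ containing $x$, so $wA_1$ is the smallest element containing $wx$; if $wx = x$ then $wA_1 = A_1$. Then $A_2$ is the smallest element of $\Fc_W$ containing $A_1$ and $l_1$, so if moreover $w[l_1] = [l_1]$ we obtain $wA_2 = A_2$, and so on by induction. Thus the encoding map is $W$-equivariant, with $w\cdot(x, A_1, l_1, \ldots, A_k, l_k) = (wx, wA_1, w[l_1], \ldots, wA_k, w[l_k])$, and the conditions $wA_i = A_i$ add nothing once $wx = x$ and the $w[l_j] = [l_j]$ (for $j \le i$) are imposed.

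Finally I would assemble the equivalences. Because the encoding is injective, $w\omega = \omega$ holds if and only if $w$ fixes the encoding of $\omega$, i.e. if and only if $wx = x$ and $w[l_i] = [l_i]$ for all $i$, the equalities $wA_i = A_i$ being automatic by the previous step. This says precisely that $w \in \stab\ x \cap \stab\ l_1 \cap \cdots \cap \stab\ l_k$, which is the claimed identity. The only genuine input beyond bookkeeping is the injectivity of the Feichtner--Kozlov encoding --- equivalently, that the recorded chain-with-lines determines every remaining projective coordinate of $\omega$ in $\prod_{A} \pp(V/A^\perp)$ --- and this is exactly what Proposition 4.2 of \cite{fk03} provides. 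I expect this reliance on the completeness of the encoding to be the main point to justify with care, as everything else reduces to the equivariance argument above.
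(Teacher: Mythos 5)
The paper itself offers no proof of this proposition: it is quoted as Proposition 4.2 of \cite{fk03} and used as a black box, so there is no in-paper argument to compare against. Your reconstruction is correct, and it is essentially the argument behind the cited result: take the Hermitian pairing $W$-invariant (possible by averaging over the finite group $W$), so that $W$ permutes $\Fc_W$ and acts on $V \times \prod_{A\in\Fc_W}\pp(V/A^\perp)$ by permuting the projective factors compatibly with the embedding $i$; observe that each step of the recursion defining the encoding is equivariant, so the encoding of $w\omega$ is $(wx, wA_1, w[l_1],\ldots,wA_k,w[l_k])$; and conclude from the injectivity (completeness) of the encoding that $w\omega=\omega$ if and only if $wx=x$ and $w[l_i]=[l_i]$ for all $i$, the conditions $wA_i=A_i$ being forced recursively from these. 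Isolating that recursive redundancy of the $A_i$'s is precisely what turns the a priori list of conditions into the stated formula $\stab\ x \cap \stab\ l_1 \cap \cdots \cap \stab\ l_k$, and your handling of the well-definedness of $\stab\ l_i$ (interpreting $l_i$ as a line in $A_i\subset V$, with $wA_i=A_i$ recovered inductively inside the intersection) is the right way to make that rigorous.

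One repair is needed in your final paragraph: you attribute the injectivity of the encoding to Proposition 4.2 of \cite{fk03}, but that proposition \emph{is} the stabilizer formula you are proving, so as written your citation is circular. The completeness of the encoding is the earlier combinatorial description of the points of the wonderful model (the encoding recalled in Section \ref{sec:quotient} of this paper, which goes back to \cite{dcp1} and to the Fulton--MacPherson style description in \cite{fm}), and that is what you should invoke as your external input. With that citation corrected, the proof is complete and matches the source's approach.
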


It is immediate to check that if \(\omega\) is a {generic point of the divisor \(\Dc_A\), more precisely
if it doesn't lie in an intersection of \(\Dc_A\) with other irreducible components of the boundary, 
it can is representated by a {\it triple} \((x,A, l=l_1)\), with stabilizer  
\[  \stab \ x \cap  \stab \ l=W_A \cap  \stab \ l.\]
In other words the stabilizer of a generic point $\omega = (x,A,l)\in \cD_A$
coincides with the stabilizer in \(W_A\) of \([l]\in \pp(V/(A)^\perp)\).
Since we identify \(\pp(V/(A)^\perp)\) with \(\pp(A)\), 
the problem of describing \( \stab \  \omega\) for a generic $\omega$
is reduced to the study of the stabilizers of the points 
of the projective space \(\pp(A)\) under the action of the parabolic 
subgroup \(W_A\), for every \(A\in \Fc_W\).
The following notion of {\em regular element} of a complex reflection group now comes into play.
\begin{definition}
	\label{Springer-generic}
	Given an irreducible  finite  reflection group \(G\) in a vector space  \(V\)
	we call an element \(g\in G\)  {\em regular}, if it has an eigenvector that does not 
      lie in any of the reflecting hyperplanes of \(G\).  If \(g\) is not a multiple of the identity, 
      we call such an eigenvector a {\em Springer regular} vector of \(V\).
\end{definition}

A classification of regular elements of   irreducible real finite reflection groups has been provided by Springer in \cite{spri}. 
Now  we  say that a generic point  $\omega=(x, A, l)$ in  $\Dc_A$ is a {\em Springer generic point} 
if the line \(l\) is {\it not} spanned by a Springer regular element in \(A\) for the action of \(W_A\). 
Finally a point $y \in \cL_A$ is \emph{Springer generic} if it is the image of a Springer-generic point of $\cD_{A}$.

Now we note that given an irreducible subspace $A$, and \(\omega \in \Dc_A\) Springer generic, 
 its stabilizer  \(W_A\cap  \stab \ l\)  is  trivial if 
%the element \(-I\) does not belong to \(W_A\), 
the center of $W_A$ is trivial, and otherwise it is 
%the group \(\Z_2\) 
a cyclic group generated by 
%\(-I\). 
a multiple of the identity in $GL(V)$.
Among  the {\it real} irreducible finite  reflection groups, as recalled in Remark \ref{minusidentity}, 
only $A_n$, $D_{2n-1}$ and $E_6$  have trivial center, while in the other cases  the center is $\Z/2$. 

%\end{proof}

\begin{proposition}
\label{prop:springergeneric}
For $A\in \Fc_W$, the component $\cL_A\subset \overline{Y}_{W}$ of the divisor 
at infinity is smooth at the Springer generic points.
\end{proposition}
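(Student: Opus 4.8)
The plan is to reduce the statement to a local computation around a Springer generic point and then to recognise the relevant local quotient as the quotient of a smooth variety by a cyclic group generated by a pseudo-reflection. First I would invoke the standard local description of a finite quotient: since $W$ is finite and acts on the \emph{smooth} variety $\overline{X}_W$, for a Springer generic point $\omega=(x,A,l)\in\cD_A$ one can choose a $\stab\ \omega$-invariant open neighbourhood $U$ of $\omega$ whose $W$-translates are either equal to $U$ (for elements of $\stab\ \omega$) or disjoint from it. This identifies a neighbourhood of the image $[\omega]\in\overline{Y}_W$ with $U/\stab\ \omega$, and the component $\cL_A$ near $[\omega]$ with the image of $\cD_A\cap U$ (the other translates $\cD_{wA}$ do not pass through the generic $\omega$). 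By the discussion preceding the statement, at a Springer generic point the stabiliser $\stab\ \omega=W_A\cap\stab\ l$ equals $Z(W_A)$: it is either trivial or the cyclic group $\langle g\rangle$ generated by an element $g$ acting on $A$ as a nontrivial scalar $\zeta$ and fixing $A^\perp$ pointwise.

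If $Z(W_A)$ is trivial the quotient map is a local isomorphism at $\omega$, so $\cL_A$ is locally isomorphic to the smooth divisor $\cD_A$ and nothing more is needed. In the cyclic case the key observation is that $g$ fixes $\cD_A$ pointwise in a neighbourhood of $\omega$: a nearby point of $\cD_A$ has the form $(x',A,l')$ with $x'\in A^\perp$ and $l'\in\pp(A)$, and $g$ fixes $x'$ (being the identity on $A^\perp$) and fixes every line $l'$ (being scalar on $A$). Linearising the finite cyclic action at $\omega$, I would choose local coordinates in which $g$ acts linearly; since it fixes the smooth hypersurface $\cD_A$ pointwise and is nontrivial, it must act as a pseudo-reflection $(u_1,\dots,u_{n-1},t)\mapsto(u_1,\dots,u_{n-1},\zeta' t)$, with $t$ a defining coordinate for $\cD_A$ and $\zeta'\neq 1$ a root of unity. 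The same conclusion follows from the explicit local model near a generic point of $\cD_A$, where only the blow-up along $A^\perp$ is relevant: writing $V=A^\perp\oplus A$, the scalar $g$ fixes the projective coordinates on the exceptional $\pp(A)$ and scales only the transverse coordinate.

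It then remains to observe that the quotient of $\C^n$ by $\langle g\rangle$ for such a pseudo-reflection is smooth: the ring of invariants is $\C[u_1,\dots,u_{n-1},t^m]$, with $m$ the order of $\zeta'$, which is again a polynomial ring, so $U/\langle g\rangle$ is smooth and the image of $\cD_A=\{t=0\}$ is the smooth divisor $\{t^m=0\}$. This is exactly the Chevalley--Shephard--Todd phenomenon for a cyclic group generated by a pseudo-reflection. Hence $\cL_A$ is smooth at $[\omega]$, as claimed.

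The main obstacle is the verification underlying the pseudo-reflection claim, namely that near a generic (and a fortiori Springer generic) point of $\cD_A$ the local geometry of $\overline{X}_W$ is governed by the single blow-up along $A^\perp$, all other strata being ``far away''. This is what makes $\cD_A$ locally smooth and lets one write the transverse coordinate $t$ explicitly and check that the central scalar $g$ acts on it, and only on it, by a root of unity. Once this local product/blow-up picture is in place, the reduction to a finite cyclic quotient and the remaining smoothness steps are formal.
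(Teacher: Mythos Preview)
Your proof is correct and follows essentially the same approach as the paper: identify the stabilizer of a Springer generic point $\omega=(x,A,l)$ with the cyclic group $Z(W_A)$ generated by an element acting as a scalar on $A$ and as the identity on $A^\perp$, observe that this generator fixes $\cD_A$ pointwise and acts by a root of unity on the normal direction, and conclude that the quotient is smooth. The paper makes the normal bundle computation explicit via the tubular neighbourhood map $((x,A,\langle v\rangle),t)\mapsto x+tv$, while you phrase it as a pseudo-reflection and invoke Chevalley--Shephard--Todd, but the content is the same.
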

\begin{proof}
Let $\omega=(x, A, l)$ be a Springer-generic point.
The statement is trivial if the stabilizer of $\omega$ is trivial. 
If $\stab \omega$ is not trivial we can assume that $Z(W_A)\simeq \Z/m$; the assumption that $\omega$ 
is Springer generic implies that   
there exists an element $\rho \in W_A$, with $\rho_{|A} = \epsilon_m I_{|A}, \rho_{|A^\perp} = Id_{|A^\perp}$, where $\epsilon_m$ is an $m$-th primitive root of unity and $\stab \omega = \langle \rho \rangle = Z(W_A).$ 
So $\rho$ fixes pointwise the divisor $\cD_A$ in  $\overline{X}_W$. 
Moreover, a point $(x,A,\langle v \rangle), t)$ ($t \in \C$ small enough)
in the normal bundle of $\cD_A$ maps onto a tubular neighborhood of $\cD_A$ via 
$$
((x,A,\langle v \rangle), t) \mapsto x + t  v
$$
with $\rho (x + t  v) = x + \epsilon_m t v$.
%if we consider the factorization $V \setminus A^\perp$
%$X_W \simeq \pp(X_W) \times \C^*$ given in Section \ref{sec:inertia_center}, we have that $\rho$ is the identity 
%on the first factor and acts as multiplication by $-1$ on the second factor and 
Since $\rho$ acts via a multiple of the identity on the normal bundle of $\cD_A$ in a neighborhood of $\omega$, 
the quotient $\cL_A$ is smooth near $\omega$.
\end{proof}

\begin{theorem} \label{thm:center_bw}
Let $W$ be an irreducible 
%real
complex 
reflection group acting on the complex vector space $V$. Let $\zeta_W \in \pi_1(Y_W) = B(W)$ be the inertia 
generator around
% the image $y_0 \in \cL_V$ of a Springer-generic point $\omega \in \overline{X}_W$.
a Springer generic point of
$\cL_V$.
The loop $\zeta_W$ is a generator of $Z(B(W)).$ 

If  the group $B(W)$ is an Artin-Tits group equipped with the classical Garside structure or a well-generated 
complex braid group equipped  with the dual Garside structure, then $\zeta_W$ is, up to orientation, the 
smallest central power of the Garside element.
\end{theorem}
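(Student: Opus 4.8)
The plan is to reduce Theorem \ref{thm:center_bw} to the already-established Theorem \ref{thm:center_b} (that $Z(B(W))$ is infinite cyclic) together with the analysis of the quotient divisor carried out in Proposition \ref{prop:springergeneric}. The inertia element $\zeta_W$ is, by construction, the homotopy class of a small counterclockwise loop around a Springer generic point of $\cL_V = W(\cD_V)/W$, read inside $\pi_1^{top}(Y_W) = B(W)$. The geometric heart of the argument is to compare this loop downstairs on $Y_W = X_W/W$ with the loop $z = z_W$ upstairs on $X_W$ whose class generates $Z(P(W))$ by Theorem \ref{thm:inertia_center}.

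First I would set up the local picture at a Springer generic point $\omega = (x, V, l)$ of $\cD_V$, exactly as in the proof of Proposition \ref{prop:springergeneric} applied to the top subspace $A = V$. Here $W_A = W_V = W$ and the stabilizer of $\omega$ is $Z(W) \simeq \Z/m$ with $m = |Z(W)|$, generated by the element $\rho$ acting as the scalar $\epsilon_m$ (a primitive $m$-th root of unity) on $V$. The key computation is that $\rho$ acts on the normal direction to $\cD_V$ by the same scalar $\epsilon_m$, so that passing to the quotient $\overline{Y}_W$ near $\omega$ wraps the normal disk $m$ times onto its image. Consequently a generator $\zeta_W$ of the inertia downstairs lifts, under the degree-$m$ branched cover, to $z^{1/m}$ in the sense that $z_W$ — the full loop upstairs — maps to $\zeta_W^{m}$; equivalently $\zeta_W$ is the $m$-th root of the image of $z$ in $B(W)$. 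I would make this precise using the local model $((x,V,\langle v\rangle), t) \mapsto x + tv$ with $\rho(x+tv) = x + \epsilon_m t v$ from Proposition \ref{prop:springergeneric}, which shows the quotient map is $t \mapsto t^{m}$ on the normal coordinate.

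Next I would combine this local degree count with the global group theory. Theorem \ref{thm:inertia_center} gives that $z_W$ generates $Z(P(W))$, and the short exact sequence \eqref{eq:short_center} together with Theorem \ref{thm:center_b} identifies $Z(B(W))$ as infinite cyclic with $Z(P(W))$ sitting inside it as the subgroup of index $|Z(W)| = m$. Writing $\beta$ for the generator of $Z(B(W))$, we have $\beta^{m} = \pi$, the generator of $Z(P(W))$, which up to orientation is $z_W$. The local analysis shows that the image of $z_W$ in $B(W)$ equals $\zeta_W^{m}$, hence $\zeta_W^{m} = \beta^{\pm m}$ in the infinite cyclic group $Z(B(W))$; since that group is torsion-free this forces $\zeta_W = \beta^{\pm 1}$, i.e. $\zeta_W$ generates $Z(B(W))$, as claimed. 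For this step I must also verify that $\zeta_W$ is genuinely central and not merely conjugate into the center — but this follows because, just as for $z$ in Section \ref{sec:inertia_center}, the factorization of the base through a $\Proj \times {\mathbb{G}}_m$-type fibration makes the inertia loop well-defined and central, and the covering $X_W \to Y_W$ is compatible with these structures.

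The final Garside clause is then immediate: under the stated hypotheses the results recalled in Section \ref{sec:center} identify the generator of $Z(B(W))$, up to orientation, with the smallest central power of the Garside element $\Delta$ (for the classical structure on spherical Artin-Tits groups, via the Proposition of \cite{ddgkm}) or of the dual Garside element (for well generated groups, via \cite[Theorem 12.3]{b2}, where $\beta$ appears exactly as this smallest central power). Since we have just shown $\zeta_W$ is such a generator, it coincides up to orientation with that power. The main obstacle I anticipate is the second step: making the comparison $z_W \mapsto \zeta_W^{m}$ fully rigorous requires care in matching orientations and in checking that the normal bundle scalar action computed locally at $\omega$ is indeed $\epsilon_m$ rather than some other power — in other words, that $\rho$ acts on the normal line to $\cD_V$ by the \emph{same} primitive root by which it acts on $V$ itself, so that the branching degree is precisely $m = |Z(W)|$ and not a proper divisor. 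Establishing that the branched cover has exactly degree $|Z(W)|$ along $\cL_V$, compatibly with the index of $Z(P(W))$ in $Z(B(W))$ given by \eqref{eq:short_center}, is where the real content lies.
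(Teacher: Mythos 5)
Your overall strategy matches the paper's: compute that the covering $X_W \to Y_W$ has branching degree $m=|Z(W)|$ along $\cL_V$, so that $z_W$ maps to $\zeta_W^{m}$, then conclude from Theorem \ref{thm:center_b} and the exact sequence \eqref{eq:short_center} that $\zeta_W$ generates $Z(B(W))$. The degree count and the cyclic-group algebra are correct, and the worry you single out at the end is a non-issue: since $\rho=\epsilon_m \mathrm{Id}$ is a scalar, it acts by exactly $\epsilon_m$ on every line, in particular on the normal parameter in the model $(\langle v\rangle,t)\mapsto tv$ of Proposition \ref{prop:springergeneric}, so the branching degree is precisely $m$.

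The genuine gap is the centrality of $\zeta_W$ in $B(W)$. You correctly note it is needed (your implication $\zeta_W^{m}=\beta^{\pm m}\Rightarrow\zeta_W=\beta^{\pm1}$ is valid only once $\zeta_W\in Z(B(W))$ is known), but the justification you give --- that the factorization $X_W\simeq \pp(X_W)\times{\mathbb{G}}_m$ is ``compatible'' with the covering $X_W\to Y_W$ --- does not hold. That trivialization comes from a section cut out by an affine translate of a hyperplane, which is not $W$-invariant; worse, the quotient fibration $Y_W\to\pp(X_W)/W$ is not even locally trivial: over the images of lines spanned by Springer regular vectors its fibers are quotients of $\C^*$ by cyclic groups strictly larger than $Z(W)$ (this is exactly why the Springer generic hypothesis appears in the statement). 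So no product structure descends, and centrality of $z_W$ in $P(W)$ does not transfer to $\zeta_W$ by ``compatibility''. This is where the paper's proof does its real work: by Chevalley's theorem $Y_W$ sits inside $V/W\simeq\C^n$ with invariant coordinates $p_1,\ldots,p_n$ of degrees $d_1,\ldots,d_n$; the loop $\zeta_W$ is represented by $\gamma(t)=(e^{2d_1\pi\imath t/m}p_1(v),\ldots,e^{2d_n\pi\imath t/m}p_n(v))$, which is a genuine loop because $m=\gcd(d_1,\ldots,d_n)$ divides every $d_j$; and for an arbitrary loop $\gamma'$ the explicit homotopy $H(t,t')=(e^{2d_1\pi\imath t/m}\gamma_1'(t'),\ldots,e^{2d_n\pi\imath t/m}\gamma_n'(t'))$ shows $\gamma\gamma'\simeq\gamma'\gamma$, staying inside $Y_W$ because the scalar action preserves the discriminant complement. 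Equivalently: the scalar $\C^*$-action on $V$ descends to $V/W$ (the scalar $\epsilon_m$ acts trivially there), $\zeta_W$ is an orbit loop of the basepoint under this descended action, and orbit loops of connected group actions are central. It is this descended action, not the product decomposition, that survives the quotient; your proof needs this argument (or an equivalent one) inserted to close the gap.
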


\begin{proof} 
Up to homotopy we can assume that $\zeta_W$ is a loop around a point $y \in \cL_V$ which is 
the projection of a Springer generic point $\omega \in \overline{X}_W$. 

Recall that in Theorem \ref{thm:inertia_center} we showed that the inertia $z_W$ around the divisor $\cD_V$ generates 
the center of $P(W)$.  If $\stab \omega $ is trivial, a neighborhood of $y$ in $\overline{Y}_W$ is homeomorphic to a neighborhood of $\omega$ in $\overline{X}_W$ and $\zeta_W$ is the image of $z_W$
via the homomorphism of topological fundamental groups induced by the quotient map $X_W \to Y_W$.
As we are assuming that $\stab \omega$ is trivial, the center
%$-I \notin W$ and hence 
$Z(W)$ is trivial. The short exact sequence \eqref{eq:short_center} of Theorem \ref{thm:center_short} 
yields an isomorphism $Z(P(W)) = Z(B(W))$ and the result follows.
	
Assume now that $\stab \omega$ is not trivial. Since $\omega$ is Springer generic, 
$\stab \ \omega = \langle \rho \rangle$, with $\rho = \epsilon_m Id\in W$ (see above).
We need to show that a) $\zeta_W$ belongs to the center $ Z(B(W))$, and that b) it generates that center.
	
To a) we recall from Section \ref{sec:inertia_center} that there is a trivial bundle 
$\pi:X_W \to \pp(X_W)$ with fiber $\C^*$ determining a decomposition $X_W \simeq \pp(X_W) \times \C^*$.
Hence a loop around $\omega$ in $\overline{Y}_W$ can be represented by a path in $X_W$
$$
t \mapsto ([v], e^{\frac{2\pi \imath t}{m}}) \mbox{ for }t \in [0,1].
$$
In particular we can represent $\zeta_W$ as a path  in $X_W \subset V$
$$
t \mapsto e^{\frac{2\pi \imath t}{m}} v\mbox{ for }t \in [0,1],
$$
where $v$ is not a Springer regular element.
We know from the classical result of \cite{chev} and \cite{st} that $V/W$ is an affine space with coordinates given by homogeneous polynomials on $V$, say $p_1(x), \ldots, p_n(x)$, of degrees $d_1, \ldots, d_n$.
	
Hence $\zeta_W$ is represented by a loop $\gamma$ in $Y_W \subset V/W$ given by
$$
t \mapsto  \gamma(t) = (e^{ \frac{2 d_1 \pi \imath t}{m}}p_1(v), \ldots, e^{ \frac{2 d_n \pi \imath t}{m}} p_n(v)) \mbox{ for }t \in [0,1].
$$ 
Let $\gamma':[0,1] \to Y_W$ be another closed path with the same base point in $Y_W$. We claim  that $\gamma$ and $\gamma'$ commute. In fact the following map $H:[0,1]\times[0,1] \to Y_W$
$$
H(t,t') = (e^{\frac{2 d_1 \pi \imath t}{m}}\gamma_1'(t'), \ldots, e^{ \frac{2 d_n \pi \imath t}{m}}\gamma_n'(t')) \mbox{ for }(t,t') \in [0,1] \times [0,1],
$$
provides a homotopy between $\gamma \circ \gamma'$ and $\gamma' \circ \gamma$. This proves that 
indeed $\zeta_W$ is central.
	
In order to show that it generates the center of $B(W)$, consider again the  short exact sequence 
\eqref{eq:short_center} of Theorem \ref{thm:center_short}. Since $Z(W) = \Z/m$ we get
$$
1 \to Z(P(W) ) \to Z(B(W)) \to \Z/m \to 1.
$$
By construction we know that $z_W$ is a generator of $Z(P(W))$ and it maps to $\zeta_W^m$. Moreover $\zeta_W$ maps to the generator of $\Z/m$. Since $Z(B(W))$ is infinite cyclic, it follows that $\zeta_W$ generates $Z(B(W))$.
\end{proof}

%\begin{remark}
%\label{rem:Bparabolici}
%About parabolic subgroups, as in the case of the pure braid group, we will use the same notation for two isomorphic groups:  %we will denote by  \(B(W_A)\)  the braid group associated with the complex reflection group \(W_A\) and the parabolic subgroup %of \(B(W)\) associated with the subspace \(A\). It will be clear from the context which is the group that we are dealing with.
%\end{remark}

We now observe that the inclusion $i_A: X(W_A) \hookrightarrow X=X(W)$ given in Equation \eqref{eq:incl} 
(see \S \ref{sec:inertia_divisor})
is $W_A$-equivariant, with the $W_A$-action compatible with the inclusion $W_A \subset W$. Hence it induces an inclusion $i_A: Y(W_A) \hookrightarrow Y = Y(W)$ and a corresponding map between the fundamental groups.
As in Section \ref{sec:inertia_divisor} we generalize the definition of inertia for an irreducible subspace $A$. 

\begin{definition} 
\label{def:inertia_braid}
The inertia class $\zeta_A\in B(W)$ associated with the divisor $\cL_A$ is the homotopy class of an oriented 
loop around  a Springer generic point of $\cL_A$ in the big open part of $\overline{Y}_W$ 
(which can be identified with $Y_W$).
\end{definition}

Now let us denote by $\cL_{A,W_A}$ the quotient divisor  of  $\overline{Y}(W_A)$  associated  to the maximal 
element of the building set, namely \(A\);   then we denote by $\zeta_{W_A}$  the inertia around  $\cL_{A,W_A}$.
Below we use \(B(W_A)\) to denote both the braid group associated with the complex reflection group \(W_A\) 
and the (isomorphic) parabolic subgroup of \(B(W)\) associated with the subspace \(A\). It will be clear from 
the context which group we are dealing with (see also Remark \ref{rem:parabolici}).

\begin{theorem}
\label{thm:injection_B}
The map 
$$(i_A)_* = \pi_1(i_A) : B(W_A)\rightarrow B=B(W)$$
induced by the inclusion $i_A: Y(W_A) \hookrightarrow Y=Y(W)$ is \emph{injective}. 
It maps the inertia element $\zeta_{W_A} \in B(W_A)$ to $
\zeta_A\in B$.  Moreover the loop $\zeta_A$ is a generator of $Z(B(W_A)),$ 
the center of the parabolic subgroup $B(W_A)$.

If  the group $B(W_A)$ is an Artin-Tits group equipped with the classical Garside structure 
or a well-generated complex braid group equipped  with the dual Garside structure then 
$\zeta_A$ is, up to orientation, the smallest central power of the Garside element of $B(W_A)$.
\end{theorem}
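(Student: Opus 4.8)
The plan is to reduce this full-braid-group statement to its pure analogue Theorem~\ref{thm:injection} and to Theorem~\ref{thm:center_bw} applied to the irreducible complex reflection group $W_A$ (which acts essentially on $A$), transporting all the data through the $W_A$-equivariant inclusion $i_A$. Concretely, everything will be obtained as the $W_A$-quotient of the picture already analysed in Section~\ref{sec:inertia_divisor}.

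First I would prove injectivity. Since $i_A\colon X(W_A)\hookrightarrow X(W)$ is $W_A$-equivariant and the two covers $X(W_A)\to Y(W_A)$ and $X(W)\to Y(W)$ are Galois with groups $W_A$ and $W$, the map $i_A$ induces a morphism of extensions
$$
\xymatrix{
1 \ar[r] & P(W_A) \ar[r] \ar[d]_{(i_A)_*} & B(W_A) \ar[r] \ar[d]^{(i_A)_*} & W_A \ar[r] \ar[d] & 1 \\
1 \ar[r] & P(W) \ar[r] & B(W) \ar[r] & W \ar[r] & 1
}
$$
in which the right vertical arrow is the inclusion $W_A\hookrightarrow W$ and the left one is injective by Theorem~\ref{thm:injection}. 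A short five lemma chase then yields injectivity of the middle arrow: if $b\in B(W_A)$ dies in $B(W)$ its image in $W$ is trivial, hence so is its image in $W_A$, so $b\in P(W_A)$; but then $b$ dies in $P(W)$ and Theorem~\ref{thm:injection} forces $b=1$.

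Next I would show $(i_A)_*(\zeta_{W_A})=\zeta_A$ by running the $W_A$-quotient of the slice argument in the proof of Theorem~\ref{thm:injection}. Fixing a Springer generic point $\omega=(x,A,l)\in\cD_A$ and the transverse slice $U_x$, its proper transform $\overline{U}_x$ is isomorphic to $\overline{X}(W_A)$ with $\overline{U}_x\cap\cD_A$ corresponding to $\cD_{A,W_A}$, exactly as before. The new ingredient is the behaviour under the quotients: the stabilizer in $W$ of the Springer generic point $\omega$ is $W_A\cap\stab\,l=Z(W_A)$, and this is literally the same group as the stabilizer of the corresponding point computed inside $W_A$. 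Hence near the image of $\omega$ the local model of $\cL_A\subset\overline{Y}(W)$ coincides with that of $\cL_{A,W_A}\subset\overline{Y}(W_A)$: in both one divides the normal bundle of the divisor by the same cyclic group $Z(W_A)\simeq\Z/m_A$ acting by scalars. Therefore an oriented loop around a Springer generic point of $\cL_{A,W_A}$, pushed forward by $i_A$, is homotopic to an oriented loop around a Springer generic point of $\cL_A$, i.e. $(i_A)_*(\zeta_{W_A})=\zeta_A$ up to orientation.

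The remaining assertions are then formal. By Theorem~\ref{thm:center_bw} applied to $W_A$ the element $\zeta_{W_A}$ generates $Z(B(W_A))$ and, whenever $B(W_A)$ carries the classical or a dual Garside structure, is up to orientation the smallest central power of the attending Garside element. As $(i_A)_*$ is injective it identifies $B(W_A)$ with the parabolic subgroup and carries $Z(B(W_A))$ isomorphically onto its center; transporting the generator $\zeta_{W_A}$ gives that $\zeta_A$ generates that center and is the image of the smallest central Garside power. I expect the main obstacle to be precisely the middle step: one must check carefully that the stabilizer of $\omega$ is the same whether computed in $W$ or in $W_A$, so that no folding occurs in $\overline{Y}(W)$ beyond that already present in $\overline{Y}(W_A)$ and the two quotient inertia loops have the same order $m_A$. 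Once this compatibility is established, the pushforward of the inertia element and the matching of Garside powers follow at once from Theorem~\ref{thm:center_bw}.
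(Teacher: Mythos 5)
Your proposal is correct and follows essentially the same route as the paper: the five-lemma chase on the diagram of extensions for injectivity, the slice argument from Theorem~\ref{thm:injection} combined with the observation that the stabilizer of a Springer generic point of $\cD_A$ is $Z(W_A)$ (whether computed in $W$ or in $W_A$) to get $(i_A)_*(\zeta_{W_A})=\zeta_A$, and Theorem~\ref{thm:center_bw} applied to $W_A$ for the remaining assertions. Your middle step simply makes explicit what the paper compresses into one sentence.
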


\begin{proof}
Injectivity follows from the $5$-Lemma (essentially, this rephrases the proof in \cite[Section 2.D]{bmr}): we already know that the natural maps $P(W_A) \to P(W)$ 
and $W_A \to W$ are injective and they fit into the following commutative diagram
$$
\xymatrix{
1\ar[r] & \ar[r] P(W_A)\ar[d]^{(i_A)_*} & B(W_A)\ar[r]\ar[d]^{(i_A)_*} & W_A\ar[r]\ar[d] & 1 \\
1\ar[r] & P(W)\ar[r] & B(W)\ar[r] & W\ar[r] & 1 \\	 
}
$$
where the first and last vertical maps are injective. 

We can choose as a representative of $\zeta_A \in \pi_1(Y_W) = B(W)$ a loop around the projection $y\in \cL_A$ 
of a Springer-generic point $\omega \in \cD_A$. Since the stabilizer $\stab \omega$ 
is the center of $W_A$, the argument used in the proof of Theorem \ref{thm:injection} 
shows that $\zeta_{W_A}\in B(W_A)$ maps to $\zeta_A\in B=B(W)$. The second part of the statement 
then follows from Theorem \ref{thm:center_bw}.

%Same argument of Theorem \ref{thm:injection}, with quotient model (case of half-loop).
\end{proof}

\section{Galois action and tangential basepoints}
\label{sec:galois}

In this section we start by recalling some facts from the algebraic theory of the fundamental group 
(\S \ref{subsecfundamentalgroup}) an then make precise the use of tangential basepoints in our setting
(\S \ref{subsectangentialbasepoints}). In both case cases we do not seek maximal generality but 
favor statements adapted to our needs. Concerning the theory of the fundamental group, \cite{sga1} remains 
indispensible for a thorough study but several more user friendly introductions are  available.
They are however adapted to the needs of readers with varying backgrounds and it is probably best for
each person to discover the one most in accordance with her or his taste. 
In connection with \S \ref{subsectangentialbasepoints} we refer in particular to \cite{mts}, \cite{na} and \cite{z}.

\subsection{Algebraic fundamental group and arithmetic Galois action}
\label{subsecfundamentalgroup}

This subsection is essentially devoted to writing down and making sense of the fundamental short
exact sequence which gives rise to the (arithmetic) Galois action on the (geometric) fundamental 
group. We start a bit abruptly and add comments after the statement.

Let $X$ be a geometrically connected scheme over the field $k$, $\bar k$
an algebraic closure of $k$, and let $\overline{X}= X \otimes \bar k$. Let $\bar x$ denote a point of
$\overline{X}$, $x$ the image of $\bar x$ via the natural map $\overline{X}\rightarrow X$. Write $\pi_1(X, x)$
(resp. $\pi_1(\overline{X}, \bar x)$) for the fundamental group of $X$ (resp. $\overline{X}$)
based at the point $x$ (resp. $\bar x$). Finally let $Gal(k)$ (or $G_k$) denote the (absolute) 
Galois group of $k$: $G_k=Gal(\bar k/k)$. Then there is a natural short exact sequence:
$$1\rightarrow  \pi_1(\overline{X}, \bar x) \rightarrow \pi_1(X, x) \rightarrow G_k\rightarrow 1, $$
which we call the fundamental short exact sequence (FSES). It gives rise to an outer action
of the arithmetic Galois group $G_k$ on the {\it geometric} fundamental group 
$\pi_1^{geom}(X, \bar x)=\pi_1(\overline{X}, \bar x)$:
$$G_k \rightarrow Out(\pi_1(\overline{X}, \bar x)),$$
where for any group $G$, $Out(G)=Aut(G)/Inn(G)$.
There remains to detail these terse indications.
First we assume that $X$ is a scheme; one can think of a variety defined over the field $\Q$ ($=k$) 
or a finite extension of it. By \cite{mar}, Theorem 1, we know that indeed both $X_W$ and $Y_W=X_W/W$ 
are defined over $\Q$ for any (finite) complex reflection group $W$.
On the other hand we could take above $X$ to be a (say Deligne-Mumford) stack 
rather than a scheme (see \cite{lv} for much more) and indeed we will need a {\it very} particular example 
of this more general case when dealing with the completions ${\overline Y}_W$ of the $Y_W$'s. 
These are naturally complex orbifolds, the analytic avatars of Deligne-Mumford stacks, 
and can be given the structure of  Deligne-Mumford stacks over (small) finite extensions 
of $\Q$. Everything then remains much as above (see \cite{lv}) but we will hardly need any general
theory for the case at hand, especially given the explicit description provided in \S \ref{sec:quotient}.

The algebraic fundamental group $\pi_1(X, x)$ is the automorphism group $Aut(\tilde X/X)$ 
of the `algebraic universal cover' $\tilde X$, defined as the (inverse) limit of the {\it finite}
\'etale covers $Y/X$. More on basepoints below. If $X$ is normal, which will always be the case here, 
{\it a fortiori} smooth, `\'etale' is just the same as `unramified'. An \'etale cover of a field $k$
is a finite separable extension. We assume from now on that $k$ has characteristic 0, so 
erase the word `separable' (and identify separable and algebraic closures). We also fix 
a complex embedding $k\hookrightarrow \C$; it is again best to think of $k$ as being $\Q$ 
or a numberfield (a finite extension of $\Q$). It is also quite useful to think of $Gal(k)$ as
$\pi_1(Spec(k))$, where $Spec(k)$ is the one-point scheme whose covers are in one-to-one
correspondence with the finite extensions of $k$; the omitted basepoint corresponds to fixing an
algebraic closure of $k$. 

The scheme $\overline{X}$ can be regarded as a `geometric' version of $X$, since all the purely 
arithmetic covers, i.e. the $X\otimes \ell$ with $\ell/k$ finite, have been effected.  
We often write $X^{geom}$ and $\pi_1^{geom}(X)$ for $X\otimes \bar k$
and $\pi_1(X\otimes \bar k)$ respectively. The (FSES) can then be thought of as the homotopy 
sequence of a fibration: The base is $k$ (more precisely $Spec(k)$), the total space is $X$, the fiber 
is $\overline{X}$. The sequence is truncated because $\pi_2(Spec(k))=0$, a statement which
can be made sense of and proved.

\begin{remark} \label{terminology} 
$\overline{X}=X\otimes \bar k$ appears as a kind of `arithmetic completion' of $X$, hence this traditional 
piece of notation. However $\overline{X}$ often stands for a {\rm geometric} completion of $X$ and here both are
needed. As mentioned already we will use $X^{geom}$ rather than $\overline{X}$ for the `arithmetic completion'.
\end{remark}

The geometric fundamental group $\pi_1^{geom}(X)=\pi_1(X\otimes \bar k, \bar x)$ can often 
be computed, but by transcendental methods only. No algebraic computation is available to this day. 
The principle is a comparison result which goes as follows. Recall we assumed that $k$ has 
characteristic 0, indeed can be regarded as a subfield of $\C$; $\bar k$ is the algebraic
closure of $k$ in $\C$. One first shows that the extension between the two algebraically
closed fields $\bar k$ and $\C$ does not alter the fundamental group: 
$\pi_1(X\otimes \C)=\pi_1(X\otimes \bar k)$. Next $X\otimes \C=X_\C$ is a $\C$-scheme,
to which one associates a complex analytic space which we denote $X^{an}$
(passing from the Zariski to the finer analytic topology). One can then 
compute $\pi_1^{top}(X^{an})$, the ordinary, topological fundamental group of the analytic
space $X^{an}$. Assuming $X$ is reduced (which again will always be the case here),
this is an analytic variety which one simply views as a topological manifold. We have assumed
that $X$ is geometrically connected, meaning $X^{geom}$ is connected and so $X^{an}$
is connected as well. The topological fundamental group $\pi_1^{top}(X^{an})$ is then 
a discrete finally generated group. The GAGA type comparison result mentioned above asserts
that $\pi_1^{geom}(X) (=\pi_1(X_\C))$ is naturally isomorphic to the profinite completion of
the group $\pi_1^{top}(X^{an})$. The newcomer is encouraged to experiment on the very first 
examples, namely take $X$ to be $\pp^1_\Q$ with 1, 2 or 3 points removed, where $\pp^1_\Q$
is the projective line over $\Q$ (i.e. the `rational line' with a point at infinity adjoined).

Now a word about basepoints. Here we use geometric points of $X$, whereas in the next 
subsection we will introduce {\it tangential basepoints}, which have proved useful since their 
introduction by P. Deligne, and are indeed indispensible for a finer analysis of the cyclotomic property 
of the Galois action. As for now, a geometric point is just a map $x: Spec(\Omega) \rightarrow X$ where 
$\Omega$ is an algebraically closed (and finitely generated) field; because of this last property, 
it factors through $X^{geom}$, and we explicitly did it above, introducing 
$\bar x: Spec(\Omega) \rightarrow \overline{X}$.
We also recall that the point $x$ need not be closed, that is $\Omega$ need not be an 
algebraic closure of $k$. In fact the (Zariski) closure of (the image of the map defining) $x$ 
is a subscheme of $X$ which, assuming $x$ is defined by a monomorphism, has dimension
equal to the transcendence degree of $\Omega$ over $k$. In particular, if $X$ is irreducible
one can take as base point its generic point, and for $\Omega$ an algebraic closure of 
the function field $k(X)$. The above extends with relatively minor 
details to the case of Deligne-Mumford stacks (see \cite{lv}).

Let us briefly return to the fundamental short exact sequence. First recall that the outer action
$G_k \rightarrow Out(\pi_1^{geom}(X))$, which is independent of the basepoint, 
is deduced purely formally from the short exact sequence.
It was realized as a great surprise, in the early eighties, that this action is often {\it faithful} 
(i.e. the map above is injective); this is famously the case for the thrice punctured 
projective line, i.e. when $X=\pp^1_\Q\setminus\{0,1,\infty\}$.
Next, the exact is split by using a {\it rational} basepoint. Indeed such a point is given 
by a map $x: Spec(k)\rightarrow X$ and applying the covariant functor $\pi_1$ yields a morphism:
$$\pi_1(x): \pi_1(Spec(k))=G_k\rightarrow \pi_1(X),$$ 
splitting the exact sequence. The justifiably unconvinced reader by this abstract argument is invited 
to get a more concrete and detailed view of the situation. In any case one can then pick a (closed) geometric 
point over $x$ (i.e. choose an algebraic closure of $k$) and use it as a base point. The upshot is that one
obtains a semidirect product defined by a bona fide action: $G_k\rightarrow  Aut(\pi_1^{geom}(X, \bar x))$,
which this time very much depends on the basepoint.
This action is clearly faithful if the outer version is, and again if $k\hookrightarrow \C$,
$\pi_1^{geom}(X)$ is (naturally isomorphic to) the profinite completion of a (in principle computable)
finitely generated discrete group ($=\pi_1^{top}(X^{an})$). The upshot is that the absolute Galois 
group $G_k$ (e.g. for $k=\Q$) has been realized as a subgroup of the (enormous) automorphism group 
of such a group. In closing we add the by now usual observation that the extension to Deligne-Mumford 
stacks is relatively easy (see again \cite{lv}).

\subsection{Tangential basepoints}
\label{subsectangentialbasepoints}
We assume that we are in a `classical' situation, which can be specified as follows:
$X/k$ is a scheme over a field $k$ of characteristic 0, $\overline{X}/k$ a geometric completion of $X$ 
(see Remark \ref{terminology}), with $\cD=\overline{X}\setminus X$ a divisor with strict normal crossings. 
We assume that $\overline{X}$ (and thus $X$) is (finitely presented), integral (i.e. reduced and irreducible) 
and regular (i.e. smooth); in other words it is a smooth variety but of courses $k$ is not assumed to be
-- and usually is not -- agebraically closed. We fix  a complex embedding: $k\hookrightarrow \C$.
All of this applies to $X=X_W$, $W$ a finite complex reflection group. Note that $X_W$ is defined
over $\Q$ whereas one may need to perform field extensions in order to construct 
${\overline X}_W$ by successive bowups of divisors which are not all defined over $\Q$.
So $k$ will be a finite extension of $\Q$, which depends on $W$.

That the divisor at infinity $\cD=\partial \overline{X}=\overline{X}\setminus X$ has strict normal crossings 
means in particular that for $x\in \cD(k)$ there exists an affine neighborhood $U$ of $x$ in $\overline{X}$, 
a regular system of parameters $(t_1,\ldots,t_N)$ at $x$ ($N=dim(X)$) and a subsystem, say 
$(t_1,\ldots,t_n)$ ($1\le n\le N$) such that the local equation of $\cD$ reads $t_1\ldots t_n=0$.
Note that here the $t_i$'s are {\it $k$-rational} functions on $U$, i.e. $U=Spec(\A)$, 
$\A$ is a $k$-algebra, $k\subset \A$ is algebraically closed in $\A$, and $t_i \in \A$. 
In the next subsection this will be made explicit for the case $X=X_W$ and the divisor at infinity
$\cD= {\overline X}_W\setminus X_W$, whose (regular) components are the $\cD_A$'s, 
$A\in \Fc_W$ an irreducible subspace.

One can base the fundamental group $\pi_1(X, x)$ at any point $x$ of $X$, not necessarily a closed point.
From the arithmetical standpoint it is natural to choose for $x$ a $k$-rational point $x\in X(k)$,
defined by a map $x: Spec(k)\rightarrow X$. But it also turned out to be useful to use the $k$-rational 
points `at infinity' as basepoints, i.e. the points of $\cD(k)$. The only problem is that they do {\it not} 
belong to $X$. Recall now that in the topological setting one can base the fundamental group
not only at a point, but in fact at any {\it simply connected} region of the space one is dealing with.
Tangential basepoints provide an algebraic implementation of this remark, the `region' in question
being small, indeed infinitesimal, and close to (part of) the boundary $\cD=\partial \overline{X}$. 

Let us start with a somewhat restricted approach, which in practice is quite useful.
A $k$-rational point $x\in X(k)$ is given by a map $x: Spec(k)\rightarrow X$. 
Of course here we assume that such exist, i.e. $X(k)\ne\emptyset$, in other words there are rational points.
Then this can be enlarged (in very many ways) to a map $Spec(k[\![ t]\!])\rightarrow X$ with a formal parameter 
$t$, representing a formal arc drawn on $X$ from the point $x$. Let now $k(\!( t)\!)=Frac(k[\![ t]\!])$ denote 
the field of Laurent formal series. A (possibly tangential) rational basepoint can be defined as a map:
$$x: Spec(k(\!( t)\!)) \rightarrow X.$$
It determines a map (still denoted $x$) $Spec(k(\!( t)\!)) \rightarrow \overline{X}$ simply by composition 
with the embedding $X\hookrightarrow \overline{X}$.
Since $\overline{X}$ is complete, this extends (valuative criterion) to a map ($x$ yet again): 
$Spec(k[\![ t]\!])\rightarrow \overline{X}$ and the closed point of $Spec(k[\![ t]\!])$ 
(evaluating a series at its constant term) determines
a $k$-rational point $s\in \overline{X}(k)$. Now there are two possibilities: either $s$ lands in $X$ or it lands 
on the boundary $\cD$. In the former case we are back to an ordinary rational basepoint on $X$; in the 
latter we get a genuine tangential basepoint, i.e. the original map $x: Spec(k(\!( t)\!)) \rightarrow X$,
which we rename $\vec s$, corresponds to the generic point of a formal arc drawn on $\overline{X}$, 
based at (the image of) $s\in \cD(k)$; that arc is transverse to $\cD$ at $s$, since its generic 
point is indeed a point of $X$ and not only of $\overline{X}$.

We  recall the original one-dimensional, affine example: the basefield is $\Q$, the varieties 
are given as $X=\pp^1\setminus \{0, 1,\infty\}$, $\overline{X}= \pp^1$, $\cD=\{0,1,\infty\}$.
Choosing the uniformizing paramater $t$ around the point $0$, one has $\Q(X)=\Q(t)$ 
with the obvious embedding $\Q(t)\hookrightarrow \Q((t))$. This yields a map
$Spec(\Q((t)))\rightarrow X$, with target the generic point $\xi$ of $X$ because the situation
is 1-dimensional. But in fact there is an underlying {\it ring} homomorphism: 
$\Q[t]\hookrightarrow \Q[\![ t]\!]$; $X=Spec (\Q[t, 1/t, 1/(1-t)])$ is affine and that ring 
homomorphism determines a map: 
$$Spec(\Q[\![ t]\!])\rightarrow \overline{X} = \pp^1,$$ 
with target the completed local ring at the point 
$0\in \overline{X} = \pp^1$. This tangential basepoint is traditionally denoted $\vec {01}$; the choice
of the parameter $t$ fixes the three punctures at $0$, $1$ and $\infty$ respectively.

The above provides a description of one-dimensional rational basepoints. It is however useful 
to have a slightly more general viewpoint at one's disposal. First from a topological standpoint, 
the `small simply connected region' which one uses as basepoint for the fundamental group needs
not be one-dimensional. Second and this time from an arithmetical viewpoint, it is unnatural
to use closed $k$-rational points of $\cD$ as basepoints, if only because such points 
may very well not even exist in the first place. 

So let us sketch a slightly more intrinsic and general approach (see \cite{mts} for a similar viewpoint).
First one regards $\overline{X}$ as a stratified scheme with generic stratum $X$, 
codimension 1 strata the components of $\cD$, etc. More precisely, 
enumerate the components of $\cD$ as $(\cD_i)_{i\in I}$, with $I$ a finite set (e.g. $I=\Fc_W$ if $X=X_W$).
We do not assume that the $\cD_i$'s are defined over $k$. This means that 
if $\xi_i$ is the generic point of $\cD_i$, $k(\xi_i)$ its function field, the closure of $k$ in
$k(\xi_i)$ is in fact a (finite) extension $\ell_i$ of $k$, and we say that $\cD_i$ is defined over $\ell_i$.

The strata of codimension $n$ are given by the components of the nonempty $n$-fold intersections 
of the $\cD_i$'s, with $n\ge 0$; $n=0$ corresponds to the generic stratum $X$. Here we somewhat
abuse terminology, confusing strata and their closures: $X$ is a stratum but $\cD_i$ is in fact
the closure of a stratum. This will not cause any ambiguity in the sequel. Now let $Y$ denote 
such a stratum, defined as a component of an $n$-fold intersection:
\begin{equation}\label{eq:4}
Y\subset \bigcap_{i\in J} \cD_i
\end{equation}
where $J\subset I$ has cardinal $n>0$ (so that $Y\ne X$). We would like to use a 
(possibly multidimensional) tangential basepoint based at the {\it generic} point $\eta$ of $Y$, 
as a basepoint for the fundamental group $\pi_1(X,\vec \eta)$ of $X$.

To this end one can start from the natural embedding: 
$$k(\xi)\hookrightarrow Frac({\widehat {\cO}}_{\overline{X},\eta})$$
of the field of fractions of $X$ ($\xi$ being the generic point of $X$)
into the completion of the local ring of $\overline{X}$ at $\eta$
(stemming from the fact that a `meromorphic' function on $\overline{X}$ is 
determined by its trace on an infinitesimal neighborhood of $Y$). 
The tangential basepoint at $\eta$ of maximal dimension is then given 
by the map deduced from the field map above, namely:
\begin{equation} \label{eq:5}
Spec(Frac({\widehat {\cO}}_{\overline{X},\eta})) \rightarrow X
\end{equation}
with target the generic point of $X$. But more concretely ${\widehat {\cO}}_{\overline{X},\eta}$ is a 
complete local ring of dimension $d=codim(Y)=N-n$, with residue field $k(\eta)=k(Y)$. 
Since $Y$ is generically smooth as a subscheme of $\overline{X}$, this ring is regular, and by Cohen's 
structure theorem it is indeed {\it non canonically} isomorphic to a ring of formal power series:
\begin{equation} \label{eq:6}
{\widehat {\cO}}_{\overline{X},\eta}\simeq  k(Y)[\![ t_1, \ldots, t_d]\!]
\end{equation}
where the $t_i$'s 
correspond to an infinitesimal parametrization of the (co)normal bundle of $Y\subset\overline{X}$.

In practice determining the tangential basepoint amounts to specifying the parameters $t_i$'s
(see \S \ref{sec:cyclotomic} below for the case $X=X_W$).
It is however often expedient to use a tangential basepoint of lower dimensions, which
amounts to fixing some relations between the $t_i$'s ($i=1,\ldots, d$), letting the number of 
independent parameters drop to $m$ ($1\le m\le d$). If $m=1$, which is quite common,
we are back to the 1-dimensional case detailed above, except that we used $\eta$ rather 
than a point of $\cD(k)$ in the construction, which is more intrinsic and arithmetically significant.
Here a tangential basepoint:
$Spec(k(Y)((t))) \rightarrow X$,
where on the left-hand side we find the field of Laurent formal series over the field $k(\eta)$,
corresponds to a `field of parametrized transverse (or conormal) formal arcs' over a dense open 
of $Y$. This amounts to setting $t_i=\phi_i(t)$ for some 
functions $\phi_i$ e.g. use the diagonal choice $t_i=t$ for all $i$, or for instance 
$\phi_i(t)=t^i$, so that $(t_1,\ldots, t_d)=(t,t^2, \ldots, t^d)$, which is useful in particular
when dealing with configuration spaces of points on surfaces. 

If $X$ is a moduli (or classifying) space for certain objects, determining such a tangential 
basepoint amounts to constructing a family over $k(\eta)((t))$. In particular one can start from 
a `degenerate' family over a dense (affine) open of $U\subset Y \subset \partial \overline{X}$ and try to 
`smear' it over $k(\eta)[\![ t]\!]$ so that the generic point w.r.t. $t$ provides the sought-after
tangential basepoint in the form of a map: $Spec(k(\eta)[\![ t]\!] \rightarrow \overline{X}$.
So $U$ is locally closed in $\overline{X}$ (taking $Y\subset \overline{X}$ to be the 
{\it closure} of a stratum) parametrizing a family of singular objects, and one has to provide 
a one-parameter infinitesimal smooth deformation of that family across the divisor $\cD$. 

This completes our short exposition of the construction of tangential basepoints for (well-behaved)
schemes. Except we did not explain why they deserve to be called basepoints! To this end one has to
show, following \cite{sga1}, that they provide fiber functors in the Galois category of the \'etale
covers of $X$. For this we refer in particular to \cite{mts} (and \cite{z}) and we will make this point more
concrete in the next subsection.

We add a few words about stacks. We will need only a tiny fraction of what follows
in order to treat the case of the ${\overline Y}_W$. The end of this subsection is thus added for
(relative) completeness and can be skipped without impairing the understanding of the sequel.
We confine ourselves to the case of a (locally noetherian) Deligne-Mumford
(DM) stack $\cX/k$ over a field $k$ ($char(k)=0$), which moreover is uniformizable, that is 
there exists an \' etale cover ({\it in the sense of stacks}) of $\cX$ which is a scheme; taking the Galois closure, 
$\cX\simeq[X/G]$ can be seen as the quotient of a scheme $X$ by a finite (algebraic $k$-)group $G$.
A standard reference for this material and much more is \cite{lm}. Now Section 3 of \cite{z} 
sketches an extension of part of the constructions above to the case of DM stacks, 
for a tangential basepoint based at a {\it closed} point of the divisor at infinity and of maximal 
dimension (i.e. with $N=dim(\cX)$ parameters). It does {\it not} however address the issue of rationality, 
which is crucial when studying the arithmetic Galois action. 

First one has to recall the notion of field of definition for a point of a stack $\cX/k$ (see \cite{lv} 
or \cite[Chap. 11]{lm}). Let $\Omega$ be an algebraically closed $k$-field (i.e. equipped with an 
embedding $k\hookrightarrow \Omega$) and let $x\in \cX(\Omega)$, be a (not necessarily closed)
$\Omega$-point, defined by a map $x: Spec(\Omega)\rightarrow \cX$. The role of the residue field 
for schemes is played by the residue {\it gerbe} $\G_x$, defined by the existence of a factorization 
$Spec(\Omega)\rightarrow\G_x\hookrightarrow \cX$ where, as indicated, the second map is a 
monomorphism (thus $\G_x$ `is' the image of the map $x$).
Because $\Omega$ is a field, the structure of $\G_x$ is simple enough; its moduli space has the
form $Spec(\kappa)$ for a field $\kappa$ with automorphism group the automorphism group 
$Aut_X(x)$ of the point $x$; in other words $\G_x\simeq [Spec(\kappa)/Aut(x)]$. 
The $k$-field $\kappa$ is then a close analog of the residue field and the closure $\ell$
of $k$ in $\kappa$ is the field of definition of the point $x$; because $\cX$ is locally
noetherian and Deligne-Mumford,  $\kappa$ is finitely generated over $k$ and $\ell$ is a finite
extension of $k$. 

Now assume that we are back to our basic situation, only with calligraphic letters, i.e. there is
a DM stack $\cX/k$, a completion $\bar{\cX}$, and a divisor with normal crossings 
$\cD= \bar{\cX}\setminus \cX$. All of this makes sense by taking an atlas. In fact,
any DM stack is locally a quotient (see \cite[Chap. 6]{lm} or \cite{lv}) but, as mentioned above, we will 
have to deal only with {\it global} DM quotients, which amounts to saying that  one can find 
an atlas which is an \'etale cover (in particular proper). So we can make this assumption,
although it does not make much of a difference regarding tangential basepoints, which are in essence 
local objects. The only sticky... point lies rather in that for a point $x$ of $\cX$,
which is a $\kappa$-point in the sense that $\G_x\simeq [Spec(\kappa)/Aut(x)]$,
there may {\it not} exist a map $Spec(\kappa)\rightarrow \cX$ representing $x$. In particular
one cannot for instance define a (1-dim) $k$-rational tangential basepoint as a map 
$Spec(k((t))) \rightarrow \cX$, and ditto for the other types of situations we encountered above.
Below we will take a rather easy way out by considering tangential basepoints on the covering
scheme and mapping it down to $\cX$. That works with any atlas; in  particular, if 
$\cX\simeq [X/G]$ one may consider a tangential basepoint on the scheme $X$, using the inverse 
image of the divisor at infinity, and then project it down. For instance if $K/k$ is an extension,
one may consider a map $x: Spec(K((t))) \rightarrow X$, defining a $K$-rational tangential
basepoint on $X$; composing with the projection $X\rightarrow \cX$, one gets a point
$Spec(K((t)))\rightarrow \cX$ and one can then try and determine the field of definition 
of this point. This is in essence what we will do for the ${\overline Y}_W$'s, which fortunately
appear to feature a simple example in this respect.

\section{ Cyclotomic Galois action}
\label{sec:cyclotomic}

In this section we overview the results, with sketches of the proofs,  of  the first step of our research plan. The aim is to produce a list of elements  which are acted on cyclotomically by the arithmetic Galois group. 
Here the slogan goes: the Galois group acts cyclotomically on divisorial inertia. 
It  can be traced back to the early fifties, if not much earlier, but that basic phenomenon has been 
`rediscovered' again and again, sometimes in much lesser generality, e.g. under the heading 
`branch cycle argument'. We start by stating the basic result we will make use of, 
which is  part of Theorem 7.3.1 of \cite{gm}.

The setting is as above, namely one starts with a scheme $X$ over a field $k$ which 
we assume to be of char. 0, so that tameness is not a problem i.e. is trivially satisfied.
(To be precise, assume that $X$ is reduced, regular, and geometrically connected, so
geometrically irreducible.) Then $\overline{X}$ is a completion of $X$ such that the divisor 
at infinity $\cD={\overline{X}}\setminus X$ has strict normal crossings (its components are smooth). 
Of course we have in mind the case $X=X_W$, $\overline{X}={\overline X}_W$.
Let $Y$ be a component of $\cD$, so $Y=\cD_A$ for some $A\in \Fc_W$ if $X=X_W$.
Assume that $Y$ is defined over $k$ i.e. work over a suitable extension of the original
field of definition of $X$ and rename it $k$.
Next consider the formal completion $\widehat X_{/ Y}$, which describes a formal neighboorhood
of $Y$ inside $\overline{X}$ or, in analytic terms, the germs of all orders of the functions on arbitrarily
small neighborhoods of $Y$. Write $\cD'=\cD\setminus Y$. 

One is interested in relating
the groups $\pi_1^\cD(\widehat X_{/ Y})$ and $\pi_1^{\cD'\cap Y}(Y)$. 
In words, the first one is the fundamental group of the formal neighborhood of $Y$ related
to covers which are unramified outside $\cD$ (in particular they can be ramified along $Y$);
the second corresponds to covers of $Y$ which are unramified outside the intersections
$\cD'\cap Y$. Note that in our favourite case, these intersections have an explicit characterization:
for $A, B \in \Fc_W$, $\cD_A$ and $\cD_B$ intersect if and only if $(A,B)$ is nested.
We can now state:

\begin{theorem} 
\label{thm:GM}
(\cite{gm}) There is a short exact sequence

$$1\rightarrow K \rightarrow  \pi_1^\cD(\widehat X_{/Y}) \rightarrow \pi_1^{\cD'\cap Y}(Y) \rightarrow 1,$$

in which the kernel $K$ is a quotient of $\widehat\Z(1)$.

\end{theorem}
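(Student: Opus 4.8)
The plan is to reconstruct the local argument of Grothendieck--Murre \cite{gm}, whose geometric content is entirely \emph{local} along $Y$ and whose arithmetic content reduces to a single Kummer-theoretic computation. Since $k$ has characteristic $0$, every finite \'etale cover of the open part is automatically tame, so $\pi_1^\cD(\widehat X_{/Y})$ classifies the finite covers of the punctured formal neighbourhood that are at worst tamely ramified along the components of $\cD$ meeting $Y$; the superscript $\cD$ records that ramification along $Y$ itself is permitted, while ramification away from $\cD$ is forbidden. First I would work over a dense open $U\subset Y$ disjoint from $\cD'$ and fix a regular parameter $t=t_Y$ cutting out $Y$, so that near $U$ one has $\cD=\{t=0\}$ and, by Cohen's theorem, a power series description in the normal direction $t$.

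The homomorphism $\pi_1^\cD(\widehat X_{/Y})\to\pi_1^{\cD'\cap Y}(Y)$ is the specialization map obtained by restricting a cover of the formal tube to its centre $Y$: such a restriction is a cover of $Y$ unramified away from the traces $\cD'\cap Y$. Surjectivity is the first substantial point. Given a cover of $Y$ tame along $\cD'\cap Y$, one extends it across the tube using topological invariance of the \'etale site (an \'etale cover of the reduced $Y$ lifts uniquely through every infinitesimal thickening) together with Grothendieck's formal existence theorem to algebraize the resulting cover of $\widehat X_{/Y}$; the ramification directions transverse to $Y$ are handled by Abhyankar's lemma. This produces a section and hence surjectivity.

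Next I would identify the kernel $K$. A cover of the formal tube that restricts trivially to $Y$ is, by the local description, built purely from Kummer covers $t\mapsto t^{1/n}$ in the normal direction, and these are classified by the projective system $(\mu_n)_n$, i.e. by $\widehat\Z(1)$. Thus $K$ is topologically generated by the single inertia class $\gamma$ around $Y$; it is the image of the full normal inertia group, abstractly $\cong\widehat\Z(1)$, under the natural map to $\pi_1^\cD(\widehat X_{/Y})$, and is therefore a procyclic \emph{quotient} of $\widehat\Z(1)$ (the map need not be injective). The Tate twist is forced by the arithmetic: an element $\sigma\in G_k$ sends the compatible system of $n$-th roots of the parameter $t$ to its $\chi(\sigma)$-th powers, where $\chi$ is the cyclotomic character, so $\sigma(\gamma)=\gamma^{\chi(\sigma)}$, which is exactly the assertion that $K$ sits inside $\widehat\Z(1)$ as a Galois module. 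This is the branch-cycle phenomenon advertised at the head of the section.

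The hard part will be exactness in the middle together with surjectivity, namely showing that the tame fundamental group of the formal neighbourhood is \emph{precisely} the extension of $\pi_1^{\cD'\cap Y}(Y)$ by the normal inertia, with no spurious generators or relations. This is the technical core of \cite{gm} and rests on the interplay between formal GAGA and Abhyankar's lemma; by contrast, the triviality of tameness in characteristic $0$ and the Kummer identification of $K$ are comparatively formal.
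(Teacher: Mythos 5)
First, a point of calibration: the paper offers no proof of this statement at all. It is quoted from Grothendieck--Murre \cite{gm} (Theorem 7.3.1), and what follows it in the text is commentary, not proof: the choice of a geometric generic point $\eta$ of $Y$ as basepoint, the observation that the outer action of $\pi_1^{\cD'\cap Y}(Y,\eta)$ on $K$ is a genuine action because $K$ is abelian, and the explanation of why $K$ is only a \emph{quotient} of $\widehat\Z(1)$ (there may not exist enough covers of $\widehat X_{/Y}$ realizing every ramification index along $Y$). So your sketch has to be measured against \cite{gm} and those comments. The parts that match are: tameness is vacuous in characteristic $0$; $K$ is procyclic, generated by the inertia class coming from Kummer covers $t\mapsto t^{1/n}$ of a normal parameter, hence a quotient of $\widehat\Z(1)$; and conjugation acts on that generator cyclotomically (the branch-cycle phenomenon). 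Two imprecisions even here: $\sigma$ fixes the parameter $t$ and acts on the roots of unity, so the correct statement is the conjugation formula $\sigma\gamma\sigma^{-1}=\gamma^{\chi(\sigma)}$, not that ``$\sigma$ sends the $n$-th roots of $t$ to their $\chi(\sigma)$-th powers''; and $K$ does not ``sit inside $\widehat\Z(1)$ as a Galois module'' --- it is a quotient of it, which is exactly the nuance the paper dwells on.

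The genuine gap is your surjectivity argument. The lifting functor you describe (\'etale invariance through infinitesimal thickenings, Abhyankar's lemma in the transverse directions, algebraization) does not ``produce a section'': it is what \emph{defines} the homomorphism $\pi_1^\cD(\widehat X_{/Y})\to\pi_1^{\cD'\cap Y}(Y)$ in the first place, since a functor between categories of covers induces the map of fundamental groups in the opposite direction. A section would require a functor going the other way, from tame covers of the formal tube to tame covers of $Y$; the obvious candidate, restriction to $Y$, is simply not defined on covers that are ramified along $Y$ itself (their restriction to $Y$ is not a cover of $Y$), and constructing a substitute --- equivalently, showing that a tame cover of the tube with trivial inertia along $Y$ is pulled back from $Y$ --- is precisely the technical core of the theorem. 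The paper names the missing ingredient explicitly: ``the surjectivity in the above exact sequence is quite subtle, as it depends on the purity theorem'' (Zariski--Nagata), which appears nowhere in your sketch. You do honestly defer middle exactness and surjectivity to \cite{gm}, but the mechanism you propose for surjectivity is a non sequitur, so the sketch does not actually reduce the theorem to anything simpler than itself at the one point where reduction was needed.
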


Comments: We will actually need only the last fact, which says that the inertia around $Y$ is a
quotient of $\widehat \Z$ and that the action of the Galois group is like on the roots of unity 
(which is the meaning of the Tate twist $(1)$). 
Let us make this more precise. First as a basepoint for the right-hand
group one uses a geometric generic point $\eta$ of $Y$. Specifically, it is given by a map
$$Spec(\Omega)\rightarrow Y,$$
where $\Omega={\overline {k(Y)}}$ is an algebraic closure of the function field of $Y$. The exact 
sequence formally determines an outer action of the right hand group on $K$, which is in fact, 
since $K$ is commutative, a {\it bona fide} action
$$\pi_1^{\cD'\cap Y}(Y,\eta)\rightarrow Aut(K) (\simeq K^\times).$$
Moreover there is a natural surjection ($Gal(\Omega)=Gal(\overline\Omega/\Omega)$)
$$Gal(\Omega)\rightarrow  \pi_1^{\cD'\cap Y}(Y,\eta).$$
By composing the two maps we thus get a morphism
$Gal(\Omega)\rightarrow K^\times.$ The twist $(1)$ denotes the fact that this  
coincides with (or is a quotient of) the action of that Galois group on the roots of unity.

The fact that we only get a {\it quotient} of $\widehat \Z$ for $K$
stems from the fact that there may not exist enough covers of $\widehat X_{/Y}$ so as to realize
an arbitrary ramification index along $Y$. It would be interesting to determine whether or not, in the
case $X=X_W$, one has in fact $K=\widehat\Z(1)$; here $K=K(W,A)$ {\it a priori} depends on $W$ and $A$
such that $Y=\cD_A$. This may be a hard problem but the analog in the case of the moduli
stacks of curves is known to hold true.  Second, the surjectivity in the above exact sequence is quite
subtle, as it depends on the purity theorem, but we will not need it here. It could however be very interesting
in the case at hand as it suggests a recursive structure very much akin to what happens with 
moduli stacks of curves. Indeed $\pi_1^{\cD'\cap \cD_A}(\cD_A)$ is immediately related to 
the arrangement determined by the parabolic subgroup $W_A\subset W$. Finally 
we did not detail the basepoints used for the middle group (see however \cite[\S 5.1.4]{gm})
because we will not use the result exactly under this form.

We could directly apply this result to our case, and we indeed encourage the reader to do so,
but in order to connect this with the previous sections, we first note that there is a natural surjective map
$\widehat X_{/ Y} \rightarrow X$.
(In the case where $X/k$ is the affine line and $Y=\{0\}$, this is the counterpart
-- taking $Spec$'s -- of the injection $k[t] \hookrightarrow k[\![t]\!]$.) Applying the fundamental group
functor we get a Galois equivariant morphism
$$\pi_1^\cD(\widehat X_{/ Y}) \rightarrow \pi_1(X),$$
under which the subgroup $K$ maps to $I\subset \pi_1(X)$, the inertia group attached to the 
divisor $Y$. It is a quotient of $K$, so again a quotient of $\widehat \Z(1)$, but of course $K$ and $I$ 
may very well differ, for global topological reasons. In particular $X$ could be simply connected,
and then $I$ is trivial, whereas $K$ usually is not. In some sense the local invariant $K$ is more 
precise than the global one $I$. 

We now proceed to make basepoints precise, using in particular tangential ones. These will actually be 
the images under the map $\widehat X_{/ Y} \rightarrow X$ of the basepoints used in \cite{gm} (which are not 
tangential, properly speaking, as far as the completion $\widehat X_{/Y} $ is concerned). 
On $Y$ we use as basepoint a geometric generic point $\eta$ as above; as for $X$ we use a tangential 
basepoint attached to $\eta$, as described in \S \ref{subsectangentialbasepoints} (see equations \eqref{eq:5} and \eqref{eq:6} with $d=1$), 
paying attention that this involves the choice of a formal parameter $t$. We denote that basepoint $\vec\eta$.

The arithmetic group $G_k=Gal(k)$ is a quotient of $Gal(\Omega)$ and in order to let the former act on the
geometric inertia group, we need to tensor with $\bar k$ and use the fundamental
exact sequence. We leave the details to the reader (see \cite[\S 3.3]{lv} for a  detailed discussion under similar
but more delicate circumstances) and call the resulting group $I_Y$. The result reads:

\begin{cor} 
\label{cor:cyclotomy}

Let $X/k$, $\overline{X}$ and $Y$ be as above, and assume that the divisor $Y$ is defined over $k$.
Then there is attached a procyclic inertia group
$I_Y\subset \pi^{geom}(X, \vec\eta)$
and the Galois action
$$G_k\rightarrow Aut(I_Y)$$
is cyclotomic.
\hfill $\square$ 

\end{cor}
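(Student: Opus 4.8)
The plan is to deduce Corollary~\ref{cor:cyclotomy} directly from Theorem~\ref{thm:GM} by a two-step passage: first a base-change descent from the large algebraically closed field $\Omega=\overline{k(Y)}$ down to the arithmetic field $k$, and second a transport of the resulting cyclotomic action along the map $\pi_1^\cD(\widehat X_{/Y})\rightarrow \pi_1(X)$ that was just described, which carries the kernel $K$ onto the sought-after inertia group $I_Y$. Since Theorem~\ref{thm:GM} already supplies a short exact sequence whose kernel $K$ is a quotient of $\widehat\Z(1)$ and on which $Gal(\Omega)$ acts cyclotomically, essentially everything we need is present; the work lies in rephrasing this over $k$ rather than over $\Omega$, and in verifying that the procyclic quotient $I_Y\subset\pi_1^{geom}(X,\vec\eta)$ inherits the Tate-twisted action. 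The hypothesis that $Y$ is defined over $k$ (after renaming the field of definition, as in the statement) is exactly what makes this descent possible, since it guarantees that the relevant geometric objects and their inertia live over $k$ and that the fundamental exact sequence of \S\ref{subsecfundamentalgroup} applies.

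\textbf{First}, I would set up the arithmetic descent. The group $G_k=Gal(k)$ is a quotient of $Gal(\Omega)$, so the cyclotomic character $Gal(\Omega)\rightarrow K^\times$ coming from Theorem~\ref{thm:GM} must be compared with the action that $G_k$ inherits after tensoring with $\bar k$ and invoking the fundamental short exact sequence. Concretely, one tensors the geometric situation with $\bar k$, so that $Y^{geom}=Y\otimes\bar k$ carries the geometric generic point $\eta$, and uses that the inertia around a divisor defined over $k$ is a $G_k$-stable subgroup of $\pi_1^{geom}(X,\vec\eta)$. The Tate twist $(1)$, which records that $Gal(\Omega)$ acts on $K$ as on the roots of unity, is preserved under this quotient precisely because the roots of unity are defined over $k$ and the cyclotomic character of $G_k$ factors the one of $Gal(\Omega)$; this is the content behind the phrase ``the Galois action is like on the roots of unity.'' I would lean on the analogous but more delicate treatment in \cite[\S3.3]{lv} for the bookkeeping, since the present circumstances are strictly simpler.

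\textbf{Second}, I would transport the action along $\pi_1^\cD(\widehat X_{/Y})\rightarrow\pi_1(X)$. The map $\widehat X_{/Y}\rightarrow X$ is Galois equivariant, so the induced map on fundamental groups sends $K$ onto the inertia subgroup $I\subset\pi_1(X)$ attached to $Y$, as already noted in the text. Since $K$ is a quotient of $\widehat\Z(1)$ and $I$ is a quotient of $K$, the group $I_Y$ obtained after tensoring with $\bar k$ is procyclic, being a quotient of a procyclic group, and the Galois action on it is cyclotomic because the quotient map is $G_k$-equivariant and cyclotomy is inherited by equivariant quotients. The tangential basepoint $\vec\eta$ attached to $\eta$ via the choice of a formal parameter $t$ (equations~\eqref{eq:5} and~\eqref{eq:6} with $d=1$) ensures that $I_Y$ sits canonically inside $\pi_1^{geom}(X,\vec\eta)$ and that the comparison of basepoints between the two groups is compatible with the Galois action.

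\textbf{The main obstacle} I anticipate is not any single hard theorem but the careful reconciliation of basepoints and Galois actions across the three groups involved: the large group $Gal(\Omega)$ with its geometric generic point $\eta$ on $Y$, the arithmetic group $G_k$ acting on $\pi_1^{geom}(X,\vec\eta)$, and the intermediate group $\pi_1^\cD(\widehat X_{/Y})$ whose basepoints in \cite{gm} are not tangential in the strict sense relative to the completion. Making sure that the cyclotomic character survives intact under both the descent from $\Omega$ to $k$ and the transport from $K$ to $I_Y$, while keeping all the Galois equivariance explicit, is the delicate part; the topological and procyclic statements themselves follow almost formally once the equivariance is pinned down. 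Since we need only the last assertion of Theorem~\ref{thm:GM}, namely the statement about the kernel and its twisted action, the subtle surjectivity and purity issues can be set aside entirely, which considerably lightens the argument.
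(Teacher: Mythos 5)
Your proposal is correct and follows essentially the same route as the paper: the paper likewise extracts only the kernel statement of Theorem~\ref{thm:GM}, pushes $K$ forward along the Galois-equivariant map $\pi_1^\cD(\widehat X_{/Y})\rightarrow\pi_1(X)$ onto the inertia group (a quotient of $\widehat\Z(1)$, hence procyclic with inherited cyclotomic action), fixes the tangential basepoint $\vec\eta$ at the geometric generic point of $Y$ to make the action exact rather than merely up to conjugacy, and descends from $Gal(\Omega)$ to $G_k$ by tensoring with $\bar k$ and invoking the fundamental exact sequence, citing \cite[\S 3.3]{lv} for the details. The only difference is the order of the two steps (you descend first and transport second, the paper the reverse), which is immaterial.
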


To put it more concretely, let $z_Y$ be a generator of $I_Y$ and assume that $k$ is a numberfield;
denote as usual by $\chi: G_k\rightarrow \hat \Z^\times$ the cyclotomic character. Then for
any $\sigma\in G_k$, one has $\sigma(z_Y)=z_Y^{\chi(\sigma)}$. It should be emphasized that this
equality is {\it exact}, not only up to conjugacy, and this was the whole point in being careful with the 
choice of basepoints. If one uses arbitrary basepoints, one will get that
$\sigma(z_Y)\sim z_Y^{\chi(\sigma)}$ (conjugacy in $\pi_1^{geom}(X)$)
which only determines a conjugacy class in $\pi_1^{geom}(X)$. Given two (possibly tangential)
basepoints $b_0$ and $b_1$, both rational (i.e. defined over $k$) the respective actions are 
conjugate via the action of the Galois group on a `path' connecting $b_0$ and $b_1$. Here a path
can be taken (almost) topologically if both points are closed rational points (they belong to $X(k)$)
but if not, one has to resort to a more algebraic notion, essentially an isomorphism of fiber functors
in the Galois category $(Et)/X$ of the \' etale covers of $X$. But again, using a generic geometric
tangential basepoint attached to the divisor, one gets an exact cyclotomic action. 

The application to our case is quite straightforward: take $X=X_W$, ${\overline{X}}={\overline X}_W$,
$Y={\cD_A}$ for some $A\in \Fc_W$. We briefly describe a tangential basepoint $\vec \eta_A$ 
as in \S \ref{subsectangentialbasepoints} above. Recall from \cite[\S 3.1]{dcp1}  that ${\overline X}_W$
is covered by a finite number of open patches ${\mathcal U}_{\Sc}^b$, where $\Sc$ varies over the
the maximal nested sets and $b$ over the bases adapted to a given $\Sc$.
Moreover ${\mathcal U}_{\Sc}^b$ is affine (say ${\mathcal U}_{\Sc}^b= Spec({\mathcal A}_\Sc^b)$)
and $\cD_A\cap {\mathcal U}_{\Sc}^b$ is given by $u_A=0$, i.e. it is the zero locus of a certain 
function $u_A$. Now given $A\in \Fc_W$, pick a maximal nested set $\Sc$ containing  $A$
and define $\vec\eta_A$ as in \S \ref{subsectangentialbasepoints}, using 
the algebra $Frac({\mathcal A}_\Sc^b)[\![u_A]\!]$. 

Corollary \ref{cor:cyclotomy} above translates as:

\begin{cor} 
\label{cor:Acyclotomy}

Given $W$ a finite complex reflection group, $A\in \Fc_W$ an irreducible element,
let $\cD_A\subset \cD={\overline X}\setminus X_W$ be the component of the divisor at infinity
labeled by $A$, and let $\pi_1^{geom}(X_W, \vec\eta_A)$ denote the geometric fundamental group
of $X_W$ based at the tangential basepoint $\vec\eta_A$. Let finally
$I_A\subset \pi_1^{geom}(X_W, \vec\eta_A)$ be the inertia group attached to the divisor $\cD_A$.

Then $I_A=\langle z_A \rangle$ is procyclic, generated by $z_A$; it is a quotient of a copy of 
$\widehat \Z (1)$ in the sense that if $k$ denotes the field of definition of $\cD_A$, the action of $Gal(k)$ 
on $I_A$ is cyclotomic. In other word, for any $\sigma \in Gal(k)$, $\sigma(z_A)=z_A^{\chi(\sigma)}$
($\chi$ = the cyclotomic character).

\hfill $\square$ 

\end{cor}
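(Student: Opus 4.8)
The plan is to specialize Corollary \ref{cor:cyclotomy} to the concrete geometric situation at hand, taking $X=X_W$, $\overline{X}=\overline{X}_W$ and $Y=\cD_A$. Once the hypotheses of that general statement are checked and the tangential basepoint $\vec\eta_A$ is pinned down together with its field of definition, the cyclotomic conclusion is inherited verbatim. Thus the real content lies not in the cyclotomy (which comes from Theorem \ref{thm:GM}) but in verifying that our objects fit the abstract framework and in the careful bookkeeping of rationality.

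First I would verify the standing assumptions. The complement $X_W$ of the reflection hyperplane arrangement is a Zariski-open subscheme of affine space, hence reduced, regular, and (geometrically) irreducible; by \cite[Theorem 1]{mar} it is moreover defined over $\Q$, so geometrically connected. The wonderful model $\overline{X}_W$ recalled in Section \ref{sec:dpm} is a smooth completion of $X_W$ whose boundary $\cD=\overline{X}_W\setminus X_W$ has strict normal crossings, and by De Concini--Procesi each component $\cD_A$ (for $A\in\Fc_W$) is smooth and irreducible. Hence all the geometric hypotheses of Corollary \ref{cor:cyclotomy} are satisfied.

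Next I would make the basepoint explicit. By \cite[\S 3.1]{dcp1} the model is covered by affine charts $\U_\Sc^b=Spec(\mathcal{A}_\Sc^b)$, indexed by maximal nested sets $\Sc$ and adapted bases $b$, in which $\cD_A$ (for $A\in\Sc$) is the zero locus $\{u_A=0\}$ of a coordinate function $u_A$. Choosing a maximal nested set $\Sc$ containing $A$ singles out the transverse parameter $u_A$, and $\vec\eta_A$ is the map $Spec\bigl(Frac(\mathcal{A}_\Sc^b)[\![u_A]\!]\bigr)\to X_W$ built as in \S \ref{subsectangentialbasepoints} (the one-dimensional case $d=1$ of \eqref{eq:6}). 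This is exactly the image, under the natural map $\widehat{X}_{/\cD_A}\to X_W$, of the basepoint used in \cite{gm}, so the equivariance needed to transport the action is automatic.

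The point deserving the most care, and the main obstacle, is rationality. Since the blowup sequence producing $\overline{X}_W$ may involve strata not defined over $\Q$, the divisor $\cD_A$ need not be a $\Q$-divisor: its field of definition $k$ is the closure of $\Q$ in the function field $k(\cD_A)$, a finite extension of $\Q$. Working over this $k$ (as the hypothesis of Corollary \ref{cor:cyclotomy} requires), one must check that the transverse parameter $u_A$ can be taken $k$-rational, so that $\vec\eta_A$ is a genuine $k$-rational tangential basepoint; here the explicit De Concini--Procesi coordinates are decisive, as the $u_A$ arise directly from the arrangement data. Granting this, Corollary \ref{cor:cyclotomy} applies: the inertia group $I_A\subset\pi_1^{geom}(X_W,\vec\eta_A)$ is procyclic, generated by $z_A$, a quotient of $\widehat\Z(1)$, and the $Gal(k)$-action is cyclotomic, i.e.\ $\sigma(z_A)=z_A^{\chi(\sigma)}$ for all $\sigma\in Gal(k)$, which is the assertion.
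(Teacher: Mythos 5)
Your proposal is correct and follows essentially the same route as the paper: the paper likewise treats this corollary as a direct specialization of Corollary \ref{cor:cyclotomy} (hence of Theorem \ref{thm:GM}) with $X=X_W$, $\overline{X}=\overline{X}_W$, $Y=\cD_A$, constructing $\vec\eta_A$ from a De Concini--Procesi chart $\U_\Sc^b=Spec(\mathcal{A}_\Sc^b)$ with $\Sc$ a maximal nested set containing $A$ and the algebra $Frac(\mathcal{A}_\Sc^b)[\![u_A]\!]$, and working over the field of definition $k=k_{W,A}$ of $\cD_A$ (the closure of $\Q$ in $k(\cD_A)$). Your extra emphasis on checking the $k$-rationality of the transverse parameter $u_A$ is a point the paper glosses over but is consistent with its setup.
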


We recall that here $X_W$ is defined over $\Q$, ${\overline X}_W$ is defined over a certain numberfield 
$k_W$ (a finite extension of $\Q$) and the field $k=k_{W,A}$ appearing in  the statement is another 
numberfield, defined as the closure of $\Q$ in the function field of $\cD_A$. In `practice' it should 
be a rather `small' and explicit extension of $\Q$.

One can amplify the above by considering commuting inertia elements attached to any stratum $\cD_\Tc$
of the divisor at infinity, where $\Tc$ is a (not necessarily maximal) nested set
(see \S \ref{subsec2dpm} above). We only very briefly sketch such an amplification, using the general
notation of \S \ref{subsectangentialbasepoints} (see Equation \eqref{eq:4}). The point is that if $J\subset J'$, we can
construct attending tangential basepoints $\vec\eta_J$ and $\vec\eta_{J'}$. Then there is an
injection $\cD_{J'}\hookrightarrow \cD_J$ (assume $\cD_{J'}$ nonempty) and a (specialization) 
morphism $\vec\eta_{J}\rightarrow \vec\eta_{J'}$ which induces a Galois equivariant morphism
$\pi_1^{geom}(X, \vec\eta_{J})\rightarrow \pi_1^{geom}(X,\vec\eta_{J'})$. In particular one can take 
$J=\{j\}$, a singleton, and $J'$ maximal. 

Let us translate this construction in our case with the  
statement below; variants are possible and we leave these and details to the curious reader.
Start from a nested set $\Sc$, construct the attending basepoint $\vec\eta_\Sc$.
Note that if $\Sc$ is maximal, $\vec\eta_\Sc$ is attached to a (closed) point of the divisor 
$\cD$ and that, correspondingly, it has dimension $=dim(X)$. Then one has

\begin{cor} 
\label{cor:Scyclotomy}

Given $W$ and $\Sc$ a nested set, let $\cD_\Sc\subset \cD$ be the corresponding stratum of the 
divisor at infinity and $d=codim(\cD_\Sc)$. Let $\vec\eta_\Sc$ be an attached tangential basepoint. 
Then there is an inertia group $I_\Sc\subset  \pi_1^{geom}(X_W, \vec\eta_\Sc)$ which is a quotient
of ${\widehat \Z}^d$ and is acted on cyclotomically. In particular, let $k_\Sc$ be the field of definition 
of $\cD_\Sc$ and let $A,B\in \Sc$; then $z_A,z_B \in I_\Sc$, $z_A$ and $z_B$ commute, and 
if $\sigma\in Gal(k_\Sc)$, $\sigma(z_A)=z_A^{\chi(\sigma)}$ (idem $B$).

\hfill $\square$ 

\end{cor}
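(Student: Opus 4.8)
The plan is to localize at the generic point $\eta$ of $\cD_\Sc$ and reduce, branch by branch, to the codimension one result already recorded in Corollary \ref{cor:Acyclotomy}. First I would choose a De Concini--Procesi chart $\U_\Sc^b$ meeting $\cD_\Sc$, in which each component $\cD_{A}$ ($A\in\Sc$) is cut out by a single function $u_A$. Writing $\Sc=\{A_1,\dots,A_d\}$ and using that $\cD$ has strict normal crossings together with the nestedness of $\Sc$ (which guarantees that $\cD_\Sc=\bigcap_{i}\cD_{A_i}$ is nonempty, smooth and of codimension $d$), the functions $u_{A_1},\dots,u_{A_d}$ form part of a regular system of parameters at $\eta$. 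Cohen's theorem (Equation \eqref{eq:6} with this choice of parameters) then gives
$$\widehat{\cO}_{\overline{X}_W,\eta}\simeq k(\cD_\Sc)[\![ u_{A_1},\dots,u_{A_d}]\!],$$
so that the punctured formal neighbourhood in which all $u_{A_i}\neq 0$ is, locally over the stratum, a product of $d$ punctured formal discs, i.e. a copy of $\mathbb{G}_m^d$ over $\mathrm{Spec}\,k(\cD_\Sc)$. This product description is the geometric source of both the commutativity and the rank $d$ in the statement.

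Next I would produce the inertia group. One option is to invoke the higher codimensional form of the Grothendieck--Murre theorem (of which Theorem \ref{thm:GM} is the codimension one instance): the tame inertia attached to a strict normal crossings divisor at a point where $d$ smooth branches meet transversally is a quotient of $\widehat{\Z}(1)^{d}$, with one procyclic factor $\langle z_{A_i}\rangle$ per branch $\cD_{A_i}$. A second, more hands-on option, matching the specialization discussion preceding the corollary, is to build $I_\Sc$ iteratively: starting from the singletons $\{A_i\}$ one applies Theorem \ref{thm:GM} to each $\cD_{A_i}$ and assembles the factors using the specialization morphisms $\pi_1^{geom}(X_W,\vec\eta_{A_i})\to\pi_1^{geom}(X_W,\vec\eta_\Sc)$. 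Either way the factors commute, precisely because the local model of the previous paragraph is a genuine product of the $\mathbb{G}_m$'s, so that $z_{A_i}$ and $z_{A_j}$ are realized by loops in independent coordinate directions. As in the codimension one case one obtains only a \emph{quotient} of $\widehat{\Z}^{d}$, since the covers of $\widehat{X}_{/\cD_\Sc}$ need not realize arbitrary ramification along each branch. Finally the natural map $\widehat{X}_{/\cD_\Sc}\to X_W$ carries this inertia group into $\pi_1^{geom}(X_W,\vec\eta_\Sc)$, where it contains the commuting elements $z_A$, $A\in\Sc$.

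To obtain the Galois action I would treat each generator $z_A$ separately. For fixed $A\in\Sc$ the procyclic factor $\langle z_A\rangle$ is nothing but the codimension one inertia of $\cD_A$, viewed through the basepoint $\vec\eta_\Sc$ rather than $\vec\eta_A$. The inclusion $\cD_\Sc\hookrightarrow\cD_A$ and the compatibility of the chosen parameters yield a Galois equivariant specialization $\pi_1^{geom}(X_W,\vec\eta_A)\to\pi_1^{geom}(X_W,\vec\eta_\Sc)$ sending the codimension one generator to $z_A$. Corollary \ref{cor:Acyclotomy} supplies the \emph{exact} cyclotomic equality $\sigma(z_A)=z_A^{\chi(\sigma)}$ at the basepoint $\vec\eta_A$; transporting it along the equivariant specialization gives the same exact equality at $\vec\eta_\Sc$, for every $\sigma\in Gal(k_\Sc)$. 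Running over $A\in\Sc$ then yields the displayed conclusion, the commutation of $z_A$ and $z_B$ having already been secured in the second paragraph.

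The technically delicate point is not the abstract structure of $I_\Sc$ --- the higher codimensional Grothendieck--Murre input is standard tame inertia theory --- but rather the bookkeeping needed to keep the cyclotomic relation \emph{exact} (and not merely up to conjugacy) under the passage from $\vec\eta_A$ to $\vec\eta_\Sc$; this is exactly where the careful choice of compatible tangential basepoints pays off. A second point requiring care, and the one I expect to be the genuine obstacle, is the field of definition: the equality $\sigma(z_A)=z_A^{\chi(\sigma)}$ for $\sigma\in Gal(k_\Sc)$ presupposes that $Gal(k_\Sc)$ fixes the branch $\cD_A$ itself, and not merely the stratum $\cD_\Sc$. Since $Gal(k_\Sc)$ could a priori permute the components $\cD_{A_i}$ passing through $\cD_\Sc$ (sending $z_{A_i}$ to a cyclotomic power of $z_{A_j}$ for a Galois conjugate branch), one should either verify from the arrangement geometry that each $\cD_A$ with $A\in\Sc$ is individually defined over $k_\Sc$, or enlarge $k_\Sc$ to the common field of definition of the branches through $\cD_\Sc$ before asserting the per component equalities.
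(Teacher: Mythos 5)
Your proposal follows essentially the same route as the paper's sketch: the affine chart $\U_\Sc^b$ with the functions $(u_A)_{A\in\Sc}$ supplying the local normal-crossings product structure (hence commutativity and the quotient-of-$\widehat\Z^d$ statement), and the Galois equivariant specialization morphisms $\pi_1^{geom}(X_W,\vec\eta_A)\rightarrow\pi_1^{geom}(X_W,\vec\eta_\Sc)$ transporting the exact cyclotomic relations of Corollary \ref{cor:Acyclotomy} branch by branch. Your closing concern about $Gal(k_\Sc)$ possibly permuting the branches through $\cD_\Sc$ is resolved in the paper precisely by your second suggested fix: $k_\Sc$ is there \emph{defined} as the compositum of the fields $k_A$, $A\in\Sc$, so that $Gal(k_\Sc)$ is a subgroup of each $Gal(k_A)$ and fixes every component $\cD_A$ individually.
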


Note that here we abused notation because the elements $z_A$ and $z_B$ mentioned in
this statement are obtained from those with the same names in Corollary \ref{cor:Acyclotomy}
by changing basepoints, from $\vec\eta_A$ (resp. B) to $\vec\eta_\Sc$. 
All this can be made more concrete using the open affine ${\mathcal U}_\Sc^b$, with functions 
$(u_A)_{A\in\Sc}$. Note also that the field $k_\Sc$
appearing in the statement is the compositum (inside $\bar \Q\subset \C$) of the $k_A$'s, 
$A\in\Sc$, which appear in Corollary \ref{cor:Acyclotomy}, so that $Gal(k_\Sc)$  is a subgroup 
of $Gal(k_A)$. 

Finally we broach, albeit briefly, the case of the {\it full} braid groups $B(W)$, which amounts to 
passing from the inertia elements $z_A$ to the $\zeta_A$'s (see \S \ref{sec:inertia_divisor} and
\S  \ref{sec:quotient}  resp.). We will content ourselves with stating and sketching the proof 
of an analog of Corollary \ref{cor:Acyclotomy}, leaving it to the interested reader to detail and amplify it.
We write $p: {\overline X}_W\rightarrow {\overline Y}_W={\overline X}_W/W$ for the natural
projection. We pick again an irreducible subspace $A\in \Fc_W$, the attending component $\cD_A$ 
of the divisor at infinity of ${\overline X}_W$ and taking up the notation of \S \ref{sec:quotient}, 
we denote by $\cL_A=p(\cD_A)\subset \cL\subset  {\overline Y}_W$ its projection. 

Now we know that $W$ acts freely on $X_W$ so that $Y_W$ is a scheme, indeed an affine scheme.
The action of $W$ is however ramified at infinity, so technically the completion ${\overline Y}_W$ is a
Deligne-Mumford stack, albeit of a fairly `mild' sort (in particular it is globally uniformizable and
generically schematic). An interesting point is with fields of definitions. Let $k=k_A$ denote as above
the field of definition of $\cD_A$, which is a finite extension of $\Q$, and let $\ell_A$ denote 
the field of definition of $\cL_A$. Then $\ell_A$ is nothing but the field of definition of
the divisor $W(\cD_A)\subset \cD$, namely the orbit of $\cD_A$ under the action of $W$.
Because the action of $W$ is rational, $\ell_A$ can be {\it smaller} than $k_A$ (i.e. a strict
subfield of it). However we will not be able to exploit this fact below and will content 
ourselves with working over $k=k_A$. It would be quite interesting, theoretically at least, 
to descend from $k_A$ to $\ell_A$.

Here we simply concoct a tangential basepoint $\vec \xi_A$ based at a geometric generic point 
of $\cL_A$. To this end it is enough to `quotient' the basepoint $\vec\eta_A$ constructed above
by the action of $W_A\subset W$. One could proceed more abstractly, but here the situation is 
particularly concrete; we refer in particular to the proof of Theorem  \ref{thm:center_bw}  above. 
So one way to achieve this is to use an affine patch ${\mathcal U}_{\Sc}^b$ 
(see above Corollary \ref{cor:Acyclotomy}). The quotient algebra ${\mathcal A}_\Sc^b/W_A$ 
is again a polynomial algebra, say ${\mathcal B}_\Sc^b$, and it defines a local affine chart 
${\mathcal V}_{\Sc}^b$ near the generic point of $\cL_A$ (or rather, to be precise, near the generic
point of $\cL_A$ when `stripped' of its possibly nontrivial automorphism group; see \cite[\S 2.2]{lv}). In particular the function 
$v_A=v_A^{1/m}$ belongs to ${\mathcal B}_\Sc^b$. Here $m=m_A=\vert Z(W_A)\vert$
is the ramification index of the projection $p$ along $\cL_A$ (compare the 
proof of Theorem  \ref{thm:center_bw}). So in order to construct the geometric tangential
basepoint $\vec \xi_A$, one considers the partially completed algebra
${\mathcal B}_\Sc^b [ \![ v_A] \! ]$, 
then tensor with $\bar k$ ($\simeq \bar \Q$) in order 
to make it geometric', and proceed as above (see \S \ref{subsectangentialbasepoints}).
The result reads

\begin{cor} 
\label{cor:quotientyclotomy}

Given $W$, $A$, $\cL_A$ as above, together with a tangential basepoint $\vec\xi_A$ based at
a geometric generic point of $\cL_A$, the inertia group 
$${\widetilde I}_A \subset \pi_1^{geom}(Y_W, \vec\xi_A)$$
is procyclic: ${\widetilde I}_A =\langle \zeta_A \rangle$. If $k=k_A$ denotes the field
of definition of an irreducible divisor $\cD_A\subset {\overline X}_W$ sitting above $\cL_A$
($p(W(\cD_A))=\cL_A)$, the action 
of the Galois group $Gal(k)$ on ${\widetilde I}_A $ is cyclotomic
($\sigma(\zeta_A)=\zeta_A^{\chi(\sigma)}$ for $\sigma\in Gal(k))$.

\end{cor}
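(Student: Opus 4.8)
The plan is to obtain this corollary not by redeveloping the local cyclotomy machinery of Theorem~\ref{thm:GM} directly on the Deligne--Mumford stack $\overline{Y}_W$, but by pushing the already established pure-braid statement, Corollary~\ref{cor:Acyclotomy}, down along the projection $p\colon \overline{X}_W\rightarrow \overline{Y}_W$. This matches the philosophy of \S\ref{subsectangentialbasepoints}: one works with tangential basepoints on the covering scheme and maps them down. The key simplification is that $Y_W=X_W/W$ is itself an \emph{honest scheme}, since $W$ acts freely on $X_W$; hence the inertia group $\widetilde I_A$ lives in the ordinary geometric fundamental group $\pi_1^{geom}(Y_W)=\widehat{B(W)}$, and the stack $\overline{Y}_W$ intervenes only in defining the basepoint $\vec\xi_A$ at infinity, which by construction is the image of the upstairs basepoint $\vec\eta_A$.

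First I would pin down the local structure of $p$ along $\cL_A$. By Proposition~\ref{prop:springergeneric} the divisor $\cL_A$ is smooth at a Springer generic point, and the computation in the proof of Theorem~\ref{thm:center_bw} shows that $\stab \omega=Z(W_A)\simeq\Z/m$ acts on the normal direction to $\cD_A$ through the scalar $\epsilon_m$. Thus, \'etale-locally near such a point, $p$ is the map $u_A\mapsto v_A=u_A^{\,m}$, where $u_A$ (resp. $v_A$) is the uniformizer cutting out $\cD_A$ (resp. $\cL_A$); the residual gerbe has been absorbed by passing to the invariant coordinate $v_A$, and $\cL_A$ is a genuine smooth divisor in this range. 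This yields at once the first assertion: the inertia $\widetilde I_A$ around a single smooth normal-crossing component is procyclic, topologically generated by the class $\zeta_A$ of a small loop around $\cL_A$.

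Next comes the transport of cyclotomy. The two tangential basepoints are compatible, $\vec\eta_A$ being built from $k_A(\!(u_A)\!)$ and $\vec\xi_A$ from $k_A(\!(v_A)\!)$ with $v_A=u_A^{\,m}$, so $p$ carries $\vec\eta_A$ to $\vec\xi_A$ through the totally ramified degree-$m$ extension $k_A(\!(v_A)\!)\hookrightarrow k_A(\!(u_A)\!)$. Since $X_W$ and $Y_W$ are defined over $\Q$ and both basepoints are $k_A$-rational, the induced comparison map
\[
p_*\colon \pi_1^{geom}(X_W,\vec\eta_A)\longrightarrow \pi_1^{geom}(Y_W,\vec\xi_A)
\]
is $Gal(k_A)$-equivariant and identifies with the open inclusion $\widehat{P(W)}\hookrightarrow\widehat{B(W)}$. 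The ramification index $m$ forces $p_*(z_A)=\zeta_A^{\,m}$ (compare the proof of Theorem~\ref{thm:center_bw}). Feeding in $\sigma(z_A)=z_A^{\chi(\sigma)}$ from Corollary~\ref{cor:Acyclotomy} and using equivariance of $p_*$ gives
\[
\sigma(\zeta_A)^{\,m}=\sigma(\zeta_A^{\,m})=(\zeta_A^{\,m})^{\chi(\sigma)}=\bigl(\zeta_A^{\chi(\sigma)}\bigr)^{\,m}.
\]

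The hard part will be the final $m$-th root extraction. Cancelling the exponent $m$ in the displayed identity to conclude $\sigma(\zeta_A)=\zeta_A^{\chi(\sigma)}$ \emph{on the nose} is legitimate precisely when $\widetilde I_A$ is torsion-free, i.e. $\widetilde I_A\simeq\widehat\Z(1)$ rather than a proper quotient of it with torsion (the paper already flags, in its comments on Theorem~\ref{thm:GM}, that such quotients can occur). Equivalently, one must know that $\zeta_A$ has infinite order in $\pi_1^{geom}(Y_W)=\widehat{B(W)}$. By Theorem~\ref{thm:injection_B}, $\zeta_A$ generates the infinite cyclic center $Z(B(W_A))$, so the real issue is that this copy of $\Z$ survives, with its full profinite topology, in the profinite completion; this rests on the residual finiteness (and goodness) of the complex braid groups at hand, which forces the closure of $\langle\zeta_A\rangle\simeq\Z$ to be a copy of the torsion-free group $\widehat\Z$. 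Granting this, the $m$ cancels and the cyclotomy relation holds exactly. I would close by noting that working over $k_A$ rather than the possibly smaller field $\ell_A$ of definition of $\cL_A$ is dictated solely by the hypotheses of Corollary~\ref{cor:Acyclotomy}; descending to $\ell_A$ would require a genuinely stacky version of Theorem~\ref{thm:GM}, which this argument is designed to avoid.
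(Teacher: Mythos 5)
Your reduction to the pure-braid case is set up correctly as far as it goes: the local picture $v_A=u_A^{\,m}$ with $m=|Z(W_A)|$, the compatibility of $\vec\eta_A$ with $\vec\xi_A$, the Galois equivariance of $p_*\colon \pi_1^{geom}(X_W,\vec\eta_A)\hookrightarrow \pi_1^{geom}(Y_W,\vec\xi_A)$, and the relation $p_*(z_A)=\zeta_A^{\,m}$ all match what the paper establishes (compare the proof of Theorem \ref{thm:center_bw}). The genuine gap is exactly at the step you yourself flag as ``the hard part'', and your proposed fix does not close it. From $\sigma(\zeta_A)^m=\zeta_A^{\,m\chi(\sigma)}$ you want to cancel $m$; but torsion-freeness of $\widetilde I_A$ is \emph{not} sufficient for this, because nothing in your argument shows that $\sigma(\zeta_A)$ lies in $\widetilde I_A$ at all (i.e.\ that the Galois action preserves the inertia subgroup on the nose, not merely up to conjugacy). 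In a nonabelian profinite group, $x^m=y^m$ does not imply $x=y$ even when the group is torsion-free; one needs $x$ and $y$ to lie in a common procyclic torsion-free subgroup. Galois stability of $\widetilde I_A$ relative to the basepoint $\vec\xi_A$ is a statement \emph{downstairs}, about the arithmetic fundamental group of $Y_W$ near the generic point of $\cL_A$; it cannot be extracted from Corollary \ref{cor:Acyclotomy} plus equivariance of $p_*$, which only constrains the subgroup $p_*(I_A)=\overline{\langle\zeta_A^{\,m}\rangle}$. So the very thing your route was designed to avoid --- running the local algebraic machinery on the quotient --- is still needed.

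Moreover, even granting stability, the torsion-freeness input you invoke ($\widetilde I_A\simeq\widehat\Z$, via residual finiteness and goodness of complex braid groups) is not available in the generality of the statement, and the paper deliberately does not rely on it: the comments after Theorem \ref{thm:GM} explicitly leave open whether inertia is a full copy of $\widehat\Z(1)$ or a proper quotient. The paper's own argument is structured precisely to avoid both problems. It constructs the explicit affine chart ${\mathcal V}_\Sc^b$ attached to the quotient algebra ${\mathcal B}_\Sc^b={\mathcal A}_\Sc^b/W_A$, which is again a polynomial algebra in which $\cL_A$ is cut out by the invariant coordinate $v_A$; near the generic point of $\cL_A$ (with the generic automorphisms stripped) one is then dealing with an honest smooth scheme and a generically smooth divisor (Proposition \ref{prop:springergeneric}), so Theorem \ref{thm:GM} and Corollary \ref{cor:cyclotomy} apply verbatim with the tangential basepoint built from ${\mathcal B}_\Sc^b[\![v_A]\!]$. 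That yields in one stroke that $\widetilde I_A$ is procyclic, Galois-stable, and acted on cyclotomically --- whatever quotient of $\widehat\Z(1)$ it happens to be --- with no root extraction and no torsion-freeness hypothesis. The upstairs comparison (the degree-$m$ cyclic cover at the generic point of $\cL_A$, $p_*(z_A)=\zeta_A^{\,m}$) then serves only to identify the generator with the inertia class $\zeta_A$ of Section \ref{sec:quotient}, not to prove cyclotomy.
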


Note that part of the situation could be unraveled {\it a priori}: since $W$ acts freely
on $X_W$, the action on a formal neighborhood of $\cL_A$ has to be as described above, i.e.
at the generic point of $\cL_A$ one gets a cyclic cover of degree equal to the cardinal of the 
stabilizer (and no less). This simple reasoning could be expanded in order to prove 
the above Corollary, but here we actually have a much more explicit view of the situation.

In conclusion one can ask the following rather natural question, which has already been raised
(see \cite{lo,lv}) \`a propos the moduli stacks or curves and their fundamental groups
i.e. the Teichm\"uller groups (a.k.a. mapping class groups): Is it true that any element 
of a profinite complex braid group (${\widehat P}_W$ or ${\widehat B}_W$) which is acted on 
cyclotomically by the Galois group, is either a torsion element or has a power which belongs to an 
inertia group attached to a stratum of the divisor at infinity of the relevant wonderful model. 
In a few words: Is it true that all the cyclotomic elements are virtually divisorial? 
This would {\it characterize} divisorial inertia in terms of the arithmetic action only.
Note that in the case of the moduli stacks of curves, at least most torsion elements of 
the fundamental groups are indeed acted on cyclotomically (see \cite{lv} as well as \cite{cm}).

\end{document}